\newcommand{\es}{\varnothing}
\title{\sc {Some Results on Triangle Partitions}}
\author{
 Ton~Kloks%
\thanks{This author is supported by the
National Science Council of Taiwan, under grant
NSC~99--2218--E--007--016.}
\and
 Sheung-Hung~Poon
}
\institute{
 Department of Computer Science\\
 National Tsing Hua University,
 No.~101, Sec.~2, Kuang Fu Rd., Hsinchu, Taiwan\\
 {\tt spoon@cs.nthu.edu.tw}
}
\begin{document}

\maketitle

\begin{abstract}
We show that there exist efficient algorithms for
the triangle packing problem in colored permutation graphs,
complete multipartite graphs, distance-hereditary graphs,
$k$-modular permutation graphs and complements of
$k$-partite graphs (when $k$ is fixed).
We show that there is an efficient
algorithm for $C_4$-packing on bipartite permutation graphs and we
show that $C_4$-packing on bipartite graphs is NP-complete. We
characterize the cobipartite graphs that have a triangle partition.
\end{abstract}

\section{Introduction}

A triangle packing in a graph $G$ is a collection of vertex-disjoint
triangles. The triangle packing problem asks for a triangle packing
of maximal cardinality. The triangle partition problem asks whether
the vertices of a graph can be partitioned into triangles.
We refer to Appendix~\ref{prel TP} for an overview of known results
on triangle packing problems.

Our objective is the study of the triangle partition
problem on permutation graphs. We establish polynomial-time
algorithms for several classes of graphs that are related to
cographs and permutation graphs. We show that there exist
polynomial-time algorithm for triangle  packings of
colored permutations, complete multipartite graphs, $k$-modular
permutation graphs, distance-hereditary graphs and complements of
$k$-partite graphs. We show that the $C_4$-packing problem can be solved
on bipartite permutation graphs and that this problem becomes NP-complete
on the class of bipartite graphs. We also characterize the cobipartite
graphs that admit a triangle partition.

Since a lot of research on triangle packing centers on interval 
graphs, we start our discussion with this class of graphs. 
 
\section{Interval graphs}

A graph is an interval graph if it is the intersection
graph of a collection of intervals on the real line~\cite{kn:hajos}.

A consecutive clique arrangement of a graph $G$ is a linear arrangement
of its maximal cliques such that for each vertex, the cliques that contain
it are consecutive.
Let $G$ be an interval graph with $n$ vertices.
Then it has at most $n$ maximal cliques.
The following theorem was proved in~\cite[Theorem~7.1]{kn:fulkerson}.

\begin{theorem}[\cite{kn:fulkerson}]
A graph $G$ is an interval graph if and only if it has a
consecutive clique arrangement.
\end{theorem}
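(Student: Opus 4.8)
The plan is to prove the two implications separately: the forward direction rests on the Helly property of intervals on the line, while the converse is a direct construction of an interval representation from the clique arrangement.

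First I would treat the ``only if'' direction. Suppose $G$ has an interval representation $\{I_v\}_{v\in V}$. For a maximal clique $K$ the intervals $\{I_v : v\in K\}$ pairwise intersect, so by the Helly property for intervals on the real line they have a common point $p_K$; fix one such point for each maximal clique. Observe these points are pairwise distinct: if $p_K=p_{K'}$ for maximal cliques $K,K'$, then every vertex of $K\cup K'$ contains that common point, so $K\cup K'$ is a clique and maximality forces $K=K'$. Hence we may list the maximal cliques as $C_1,\dots,C_k$ with $p_{C_1}<\cdots<p_{C_k}$. To check this is a consecutive clique arrangement, fix a vertex $v$ and indices $i<j<\ell$ with $v\in C_i$ and $v\in C_\ell$; I claim $v\in C_j$. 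Since $p_{C_i},p_{C_\ell}\in I_v$ and $I_v$ is an interval, $p_{C_j}\in I_v$ as well. Every $u\in C_j$ has $p_{C_j}\in I_u$, hence $I_u\cap I_v\neq\es$ and $u$ is adjacent to $v$ (or equal to it); so $C_j\cup\{v\}$ is a clique, and maximality of $C_j$ gives $v\in C_j$.

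For the ``if'' direction, suppose $C_1,\dots,C_k$ is a consecutive clique arrangement of $G$. To each vertex $v$ assign the closed interval $I_v=[a_v,b_v]$ with $a_v=\min\{i : v\in C_i\}$ and $b_v=\max\{i : v\in C_i\}$, which is well defined since every vertex lies in some maximal clique. By the consecutiveness hypothesis, $v\in C_i$ for \emph{every} integer $i$ with $a_v\le i\le b_v$. Now if $uv\in E$ then $u$ and $v$ lie together in some maximal clique $C_i$, so $i\in I_u\cap I_v$ and the intervals meet. Conversely, if $I_u\cap I_v\neq\es$ pick an integer $i$ in the intersection; then $u,v\in C_i$, so $uv\in E$ whenever $u\neq v$. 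Thus $\{I_v\}$ realizes $G$ as an interval graph.

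The main obstacle is the forward direction, and specifically the step of ``pinning'' each maximal clique to a single point of the line via the Helly property; once that is available, consecutiveness of the cliques through any fixed vertex follows immediately from the convexity of $I_v$. In the converse direction the one point to watch is that the hypothesis concerns \emph{maximal} cliques, so that an integer common to $I_u$ and $I_v$ genuinely witnesses a maximal clique containing both $u$ and $v$, and hence the edge $uv$.
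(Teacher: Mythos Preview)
Your proof is correct and is essentially the standard argument for the Fulkerson--Gross characterization. Note, however, that the paper does not supply its own proof of this theorem: it is quoted as a known result with a citation to~\cite{kn:fulkerson}, so there is no in-paper argument to compare against. Your write-up would serve perfectly well as a self-contained proof; the only small remark is that in the converse direction you might make explicit why a nonempty $I_u\cap I_v$ contains an \emph{integer} point (immediate since all endpoints $a_v,b_v$ are integers), as that is what lets you land in an actual clique $C_i$.
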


Consider the following problem, called
the `partition into bounded cliques' problem in~\cite{kn:bodlaender2}.
\begin{quote}
Let $G=(V,E)$ be a graph and let
$r$ and $s$ be integers. Can $V$ be partitioned into
$s$ cliques each of cardinality at most $r$?
\end{quote}
This problem can be solved in linear time on interval
graphs~\cite{kn:bodlaender2,kn:papadimitriou}.

\smallskip
In the following we denote by $\tau(G)$ the maximal number of
vertex disjoint triangles in $G$.
The following lemma is easy to check.

\begin{lemma}
\label{at least three vertices}
Let $G$ be an interval graph and let $[C_1,\ldots,C_t]$ be a
consecutive clique arrangement.
\begin{enumerate}[\rm (i)]
\item If there is a maximal clique
with only one vertex $x$ then $x$ is an isolated vertex.
In that
case $\tau(G)=\tau(G-x)$.
\item
If there is a maximal clique with exactly two vertices $x$ and $y$
then $(x,y)$ is a bridge.
Let $G^{\prime}$ be the graph obtained from $G$ by deleting the edge
$(x,y)$ but not its endvertices. Then $G^{\prime}$ is an interval
graph and $\tau(G)=\tau(G^{\prime})$.
\end{enumerate}
\end{lemma}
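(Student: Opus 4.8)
The plan is to treat the two parts separately, in each case combining the maximality of the small clique with the defining property of a consecutive clique arrangement. Part~(i) is immediate: if $\{x\}$ is a maximal clique and $x$ had a neighbour $v$, then $\{x,v\}$ would be a clique properly containing $\{x\}$, so $x$ is isolated; and since an isolated vertex lies in no triangle, the triangle packings of $G$ and of $G-x$ are the same, whence $\tau(G)=\tau(G-x)$.

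For part~(ii) I would first record two consequences of the maximality of $C=\{x,y\}$. A common neighbour $z$ of $x$ and $y$ would make $\{x,y,z\}$ a clique properly containing $C$; hence $x$ and $y$ have no common neighbour, and in particular the edge $(x,y)$ lies in no triangle of $G$. Similarly, a maximal clique other than $C$ containing both $x$ and $y$ would properly contain $C$, so $C$ is the only maximal clique containing both of them. Write $C=C_i$. The cliques through $x$ occupy a consecutive block of indices, as do the cliques through $y$, and by the previous remark these two blocks meet only at $i$; hence, after possibly interchanging the names of $x$ and $y$, every clique containing $x$ has index $\le i$ and every clique containing $y$ has index $\ge i$.

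Next I would prove $(x,y)$ is a bridge by exhibiting an explicit bipartition of $V$ across the edge. Put $P_L=\{x\}\cup\bigcup_{j<i}C_j$ and $P_R=\{y\}\cup\bigcup_{j>i}C_j$. Every vertex lies in some $C_j$, so $P_L\cup P_R=V$; a vertex in $P_L\cap P_R$ other than $x$ or $y$ would lie in a clique of index $<i$ and in one of index $>i$ and hence, by consecutiveness, in $C_i=\{x,y\}$ --- a contradiction --- so $P_L\cap P_R=\varnothing$. Finally, any edge with one endpoint in $P_L$ and the other in $P_R$ is contained in some maximal clique $C_m$, and comparing $m$ with $i$ via consecutiveness (otherwise one of the endpoints would be forced into $C_i$) shows that the endpoints must be exactly $x$ and $y$; thus $(x,y)$ is the unique edge across the bipartition, so deleting it separates $x$ from $y$, and since $x$ and $y$ are adjacent in $G$ the edge $(x,y)$ is a bridge.

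It remains to check that $G'$ is an interval graph and that $\tau(G')=\tau(G)$. Because $C_i$ is the only maximal clique of $G$ containing both $x$ and $y$, deleting the edge $(x,y)$ destroys exactly the clique $C_i$ while every other maximal clique of $G$ remains a maximal clique of $G'$; in the degenerate case where $x$ (resp.\ $y$) had no neighbour besides $y$ (resp.\ $x$) one moreover adds the singleton clique $\{x\}$ (resp.\ $\{y\}$). Deleting $C_i$ from the list $[C_1,\dots,C_t]$ and appending any such singleton cliques to one end keeps the set of indices of cliques through each fixed vertex a consecutive block, so the result is a consecutive clique arrangement of $G'$ and $G'$ is an interval graph by the characterization above. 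For the triangle count, $\tau(G')\le\tau(G)$ holds since $G'$ is a subgraph of $G$, and $\tau(G)\le\tau(G')$ holds because a maximum triangle packing of $G$ uses no triangle through $(x,y)$ --- that edge is in no triangle --- so it is already a triangle packing of $G'$. The only genuine work is bookkeeping: verifying in the bridge step that $(x,y)$ really is the sole cross edge, and confirming that the modified clique list is exactly the list of maximal cliques of $G'$, the degenerate isolated-vertex cases being where one must be careful. Everything conceptual sits in the two maximality observations together with the consecutiveness of the arrangement.
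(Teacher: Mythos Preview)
Your proof is correct. The paper itself supplies no argument for this lemma, remarking only that it ``is easy to check,'' so there is no approach to compare yours against; your use of the maximality of the small clique together with the consecutiveness of the clique arrangement is exactly the natural route, and the bookkeeping you flag (the lone cross edge in the bipartition, and the placement of possible singleton cliques in the new arrangement) is handled correctly.
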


\noindent
Henceforth we may assume that every maximal clique in the
consecutive clique arrangement has at least three vertices.

\begin{lemma}
\label{greedy}
Let $G$ be an interval graph and let $[C_1,\ldots,C_t]$
be a consecutive arrangement of its maximal cliques. Consider
an ordering of the vertices in $C_1$ by increasing degree or,
equivalently, by increasing right endpoints of the corresponding
intervals.
There is a triangle packing of $G$ with maximal cardinality
such that all vertices of $C_1$ are covered except, possibly the
smallest or, the two smallest vertices.
\end{lemma}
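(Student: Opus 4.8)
\noindent
The plan is to start from an arbitrary \emph{maximum} triangle packing $\mathcal{P}$ of $G$ and to transform it, by a sequence of cardinality-preserving exchanges, into one in which the vertices of $C_1$ that remain uncovered form a prefix of the given ordering, of length at most two.

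First I would isolate the structural fact that makes the ordering useful. Write $v_1,\ldots,v_k$ for the vertices of $C_1$ in the prescribed order, and for a vertex $v$ let $\beta(v)$ be the index of the last clique of the arrangement containing $v$. Since $C_1$ is the first clique, every $v\in C_1$ occupies exactly the clique-interval $[C_1,\ldots,C_{\beta(v)}]$, so ordering $C_1$ by increasing right endpoint is the same as ordering by increasing $\beta$. A vertex $w$ lies in $N[v_i]$ iff the clique-interval of $w$ meets $[C_1,\ldots,C_{\beta(v_i)}]$, i.e.\ iff the first clique containing $w$ has index at most $\beta(v_i)$. Hence $N[v_1]\subseteq N[v_2]\subseteq\cdots\subseteq N[v_k]$; in particular $|N[v_i]|$ is nondecreasing in $i$, which also re-proves the asserted equivalence of the two orderings.

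The heart of the argument is the exchange step. If three vertices of $C_1$ were uncovered by $\mathcal{P}$, they would span a triangle disjoint from every triangle of $\mathcal{P}$, contradicting maximality; so at most two vertices of $C_1$ are uncovered by any maximum packing. Now suppose $i<j$ with $v_i$ covered and $v_j$ uncovered, and let $T=\{v_i,a,b\}\in\mathcal{P}$ be the triangle through $v_i$. Since $a,b\in N[v_i]\subseteq N[v_j]$, and $a,b\neq v_j$ (both $a,b$ are covered while $v_j$ is not, and $a\neq b$ as they lie in $T$), the triple $\{v_j,a,b\}$ is again a triangle; replacing $T$ by it yields a packing of the same cardinality in which $v_j$ is covered, $v_i$ is not, and all other vertices are unchanged. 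Measuring progress by $\Phi(\mathcal{P})=\sum\{\,i : v_i\text{ uncovered}\,\}$, each such swap strictly decreases $\Phi$ by $j-i\ge 1$, so after finitely many swaps no pair $i<j$ with $v_i$ covered and $v_j$ uncovered survives; equivalently, the uncovered vertices of $C_1$ now form an initial segment $\{v_1,\ldots,v_m\}$. The cardinality never changed, so the packing is still maximum, whence $m\le 2$ by the first observation, i.e.\ every vertex of $C_1$ is covered except possibly $v_1$, or $v_1$ and $v_2$.

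The only place that really needs attention is the verification that $\{v_j,a,b\}$ is a genuine new triangle and that the substitution keeps the triangles vertex-disjoint, but this is immediate once the nesting $N[v_1]\subseteq\cdots\subseteq N[v_k]$ is in hand, so I do not foresee a serious obstacle: the lemma is a clean instance of the ``uncover only the smallest-degree vertices'' exchange technique, and the nested-neighborhood property is precisely what licenses it.
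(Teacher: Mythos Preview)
Your proof is correct and follows essentially the same approach as the paper: both argue that at most two vertices of $C_1$ can be uncovered in a maximum packing, and both use the swap ``if a smaller vertex is covered and a larger one is not, exchange them in the triangle'' to push the uncovered vertices to the bottom of the ordering. You supply more detail than the paper does---making the nested-neighborhood inclusion $N[v_1]\subseteq\cdots\subseteq N[v_k]$ explicit and using a potential function rather than a bare inductive appeal---but the underlying idea is identical.
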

\begin{proof}
If there are at least three vertices in $C_1$ not covered by a triangle
then we can add a triangle to the packing.

Let $\alpha$ and $\beta$ be two vertices in $C_1$ and assume that
$\alpha < \beta$ in the right endpoint ordering. Assume that $\alpha$ is
in a triangle of a triangle packing $\mathcal{P}$ and that
$\beta$ is not. Then we can switch $\alpha$ and $\beta$ in the triangle
and obtain an alternative packing $\mathcal{P}^{\prime}$ with the
same number of triangles such that $\beta$ is covered. The claim follows
by induction by recursively switching any vertex that is not
covered with the
smallest vertex in $C_1$ that is covered.
\qed\end{proof}

\begin{lemma}
\label{all three}
Let $G$ be an interval graph and let
$[C_1,\ldots,C_t]$ be a consecutive arrangement
of its maximal cliques. Let $\alpha$, $\beta$ and $\gamma$ be the three
smallest vertices of $C_1$ in the ordering by right endpoints.
If $\alpha$ is covered by a triangle in a triangle packing
$\mathcal{P}$ then there exists a triangle packing
$\mathcal{P}^{\prime}$ of the same cardinality as $\mathcal{P}$
such that $\{\alpha,\beta,\gamma\}$ is a triangle of $\mathcal{P}^{\prime}$.
\end{lemma}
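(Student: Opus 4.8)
The plan is to exploit the very special location of $\alpha,\beta,\gamma$ in the clique arrangement and then to rearrange, inside $\mathcal{P}$, only the at most three triangles that meet $\{\alpha,\beta,\gamma\}$. Represent each vertex $v$ by the interval $[l(v),r(v)]$ of clique indices to which it belongs, so that $u$ and $v$ are adjacent iff $l(u)\le r(v)$ and $l(v)\le r(u)$, and $v\in C_1$ iff $l(v)=1$. First I would prove the structural fact $N[\alpha]=C_1$: if $v$ is adjacent to $\alpha$ then $l(v)\le r(\alpha)\le r(u)$ for every $u\in C_1$, since $\alpha$ has the smallest right endpoint in $C_1$; hence if $v$ were not in $C_1$ then $C_1\cup\{v\}$ would be a clique properly containing $C_1$, contradicting maximality, and since conversely $C_1$ is a clique containing $\alpha$ we get equality. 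Consequently the triangle $T_\alpha\in\mathcal{P}$ covering $\alpha$ lies entirely in $C_1$; write $T_\alpha=\{\alpha,x,y\}$ with $x,y\in C_1\setminus\{\alpha\}$. A companion fact, with the same proof, is that every neighbour of $\beta$ is adjacent to each vertex of $C_1\setminus\{\alpha\}$ and every neighbour of $\gamma$ is adjacent to each vertex of $C_1\setminus\{\alpha,\beta\}$, using that in the right-endpoint order every vertex of $C_1$ except $\alpha$ has right endpoint at least $r(\beta)$, and every one except $\alpha,\beta$ has right endpoint at least $r(\gamma)$.

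Next I would use these facts in two steps. Step one arranges that $\beta$ lies in the triangle covering $\alpha$. If it already does (in particular if $x$ or $y$ equals $\beta$), nothing is needed. If $\beta$ is uncovered, replace $T_\alpha$ by $\{\alpha,\beta,y\}$, which is a triangle because all three vertices lie in $C_1$. If $\beta$ is covered by a different triangle $T_\beta=\{\beta,p,q\}$, then $p,q$ are neighbours of $\beta$ and hence, by the companion fact, adjacent to both $x$ and $y$; so replace $T_\alpha$ by $\{\alpha,\beta,y\}$ and $T_\beta$ by $\{x,p,q\}$, two vertex-disjoint triangles that use precisely the six vertices of $T_\alpha\cup T_\beta$. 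In every case the cardinality is preserved and the triangle covering $\alpha$ now has the form $\{\alpha,\beta,y\}$ with $y\in C_1\setminus\{\alpha,\beta\}$ (and if $y=\gamma$ we are already done). Step two repeats this with $\gamma$ and the triangle $\{\alpha,\beta,y\}$: if $\gamma$ is uncovered, replace that triangle by $\{\alpha,\beta,\gamma\}$; if $\gamma$ lies in another triangle $T_\gamma=\{\gamma,u,w\}$, then $u,w$ are neighbours of $\gamma$ and hence adjacent to $y\in C_1\setminus\{\alpha,\beta\}$, so replace $\{\alpha,\beta,y\}$ by $\{\alpha,\beta,\gamma\}$ and $T_\gamma$ by $\{y,u,w\}$. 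The resulting packing $\mathcal{P}'$ has the same cardinality as $\mathcal{P}$ and contains $\{\alpha,\beta,\gamma\}$.

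The main obstacle, I expect, is the structural fact $N[\alpha]=C_1$; this is precisely what forces the partner vertices $x,y$ to reach far enough along the clique arrangement that any vertex released from $T_\beta$ or $T_\gamma$ is still adjacent to them, so that each swap really produces triangles. Granting that, what remains is the short finite check above of how $\beta$ and $\gamma$ are situated relative to the triangle covering $\alpha$ (uncovered, already there, or in a private triangle), together with the routine verification in each swap that the two new triangles are vertex-disjoint and disjoint from the untouched part of the packing — which is immediate, since they re-use only vertices of the removed triangles.
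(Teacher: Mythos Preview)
Your proof is correct and follows essentially the same approach as the paper's. Both arguments first observe that $\alpha$'s only neighbours are in $C_1$ (you phrase this as $N[\alpha]=C_1$; the paper says ``$\alpha$ is contained only in $C_1$''), so the triangle covering $\alpha$ sits inside $C_1$, and then perform two successive swaps to bring $\beta$ and then $\gamma$ into that triangle, using that the vertex being released from $C_1$ has right endpoint at least $r(\beta)$ (respectively $r(\gamma)$) and hence is adjacent to everything in $\beta$'s (respectively $\gamma$'s) triangle. Your ``companion fact'' is just a cleaner packaging of the paper's observation that the first clique containing $\{\beta,r,s\}$ also contains the released vertex; otherwise the case analysis and the swaps are identical.
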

\begin{proof}
Since $C_1$ is a maximal clique it has a vertex that is not contained
in $C_2$. Consequently, $\alpha$ is contained only in $C_1$.
Consider a triangle $\{\alpha,p,q\}$ in $\mathcal{P}$. Then
$p$ and $q$ are in $C_1$. Let $p$ be the smallest of the two.
Assume that
$\beta \neq p$. If $\beta$ is not covered by any triangle of
$\mathcal{P}$ then we can replace $p$ with $\beta$. Assume that
$\beta$ is in a triangle $\{\beta,r,s\}$ of $\mathcal{P}$. Let $C_i$
be the first clique that contains all three vertices $\beta$, $r$ and $s$.
Then $C_i$ also contains $p$, since $p$ is larger than $\beta$.
Replace the two triangles $\{\alpha,p,q\}$ and $\{\beta,r,s\}$
with $\{\alpha,\beta,q\}$ and $\{p,r,s\}$. A similar argument
shows that, if $q \neq \gamma$ then we can replace $q$ with $\gamma$
and obtain an alternative packing $\mathcal{P}^{\prime}$ with the
same number of triangles.
\qed\end{proof}

\begin{theorem}
The triangle partition problem can be solved in linear time
on interval graphs. The triangle packing problem can be solved
by an exponential algorithm which runs in $O^{\ast}(1.47^n)$ time.
\end{theorem}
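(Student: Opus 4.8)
The plan is to handle the two assertions separately, in both cases exploiting the leftmost maximal clique of a consecutive clique arrangement. For the triangle partition problem there are two routes. The quick one is a reduction to the \emph{partition into bounded cliques} problem: after testing the obvious necessary condition $3\mid n$, observe that three mutually adjacent vertices form exactly a clique of size three, and that a partition of $V$ into $n/3$ cliques each of cardinality at most $3$ must consist solely of cliques of size exactly $3$ (the sizes sum to $n=3\cdot(n/3)$ with every part $\le 3$). Hence $G$ admits a triangle partition iff $3\mid n$ and $V$ can be partitioned into $n/3$ cliques of cardinality at most $3$, and the latter is decidable in linear time on interval graphs by~\cite{kn:bodlaender2,kn:papadimitriou}. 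Alternatively, one can argue directly with a left-to-right sweep: compute a consecutive clique arrangement $[C_1,\ldots,C_t]$ in linear time; use Lemma~\ref{at least three vertices} to dispose of maximal cliques of size one (an isolated vertex forces the answer \textsc{no}) and of size two (delete the corresponding bridge edge, which lies in no triangle, so both the interval property and the existence of a triangle partition are preserved); once every maximal clique has at least three vertices, note that in any triangle partition the smallest vertex $\alpha$ of $C_1$ in the right-endpoint order is covered, so by Lemma~\ref{all three} there is a triangle partition containing $\{\alpha,\beta,\gamma\}$, where $\beta,\gamma$ are the next two vertices of $C_1$; therefore $G$ has a triangle partition iff $G-\{\alpha,\beta,\gamma\}$ does. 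Repeatedly deleting the three leftmost vertices of the current leftmost clique, re-running the size-one and size-two reductions as needed, and maintaining the arrangement under these deletions with standard interval-graph data structures gives the linear-time algorithm.

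For the triangle packing problem I would use the same leftmost clique but branch rather than commit. After the reductions of Lemma~\ref{at least three vertices}, let $\alpha,\beta,\gamma$ be the three smallest vertices of $C_1$. Either some maximum packing covers $\alpha$ --- in which case Lemma~\ref{all three} lets us assume that $\{\alpha,\beta,\gamma\}$ is one of its triangles --- or no maximum packing covers $\alpha$, in which case a maximum packing of $G$ is a maximum packing of $G-\alpha$. Checking both directions yields
\[
\tau(G)=\max\bigl(\,1+\tau(G-\{\alpha,\beta,\gamma\}),\ \tau(G-\alpha)\,\bigr),
\]
so the algorithm recurses on the two smaller instances, performing only polynomially much work per node (updating the arrangement and re-running the reductions, which never increase the number of vertices). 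The running time obeys $T(n)\le T(n-3)+T(n-1)+\mathrm{poly}(n)$, whose solution is $O^{\ast}(c^{n})$ with $c$ the largest real root of $x^{3}=x^{2}+1$; since $c\approx 1.466<1.47$, the stated $O^{\ast}(1.47^{n})$ bound follows.

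The branching and the recurrence analysis are routine once the lemmas are available; the point that really needs care is the partition algorithm, where one must keep the consecutive clique arrangement, together with the right-endpoint order within each clique, up to date under the repeated deletion of the three leftmost vertices and under bridge-edge removals, so that the \emph{total} maintenance cost stays $O(n+m)$ rather than that much per step. I expect this to come either from a single pass over the clique points with the usual interval-graph data structures, or simply from invoking the linear-time bounded-cliques algorithm cited above, with the necessary condition $3\mid n$ being detected automatically.
\qed
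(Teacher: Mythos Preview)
Your proposal is correct and, for the packing bound, identical to the paper's argument: branch on whether the smallest vertex $\alpha$ of $C_1$ is covered, invoke Lemma~\ref{all three} in the affirmative case, and solve the recurrence $T(n)=T(n-1)+T(n-3)$, whose base is the real root of $x^3=x^2+1$, roughly $1.466$.

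For the partition claim your second route (the left-to-right sweep via Lemmas~\ref{at least three vertices} and~\ref{all three}) is exactly what the paper does, only spelled out in more detail. Your first route, reducing to the \emph{partition into bounded cliques} problem with $s=n/3$ and $r=3$ and invoking the linear-time algorithm of~\cite{kn:bodlaender2,kn:papadimitriou}, is not in the paper's proof but is a perfectly valid and arguably cleaner alternative: the counting argument forcing every block to have size exactly $3$ is sound, and this reduction automatically handles the data-structure maintenance concerns you raise at the end. The paper mentions that bounded-cliques result just before the lemmas but does not use it in the proof of the theorem; your observation that it yields the partition result directly is a nice shortcut.
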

\begin{proof}
The first claim follows from Lemma~\ref{all three}.
The second claim follows from the recurrence
\[T(n)=T(n-1)+T(n-3).\]
To see that this recurrence holds, observe
that the minimal element of $C_1$ is either not in any
triangle or it is in a triangle together with the next two
smallest elements of $C_1$.
\qed\end{proof}

An $O(n \log n)$
algorithm for maximum matching in interval graphs is presented
in~\cite{kn:liang,kn:moitra}. For the class of strongly chordal graphs,
which includes the
class of interval graphs, there exists
a linear-time algorithm for maximum matching
when a strong elimination ordering of the
graph is part of the input~\cite{kn:dahlhaus}.
Dahlhaus {\em et al.\/}, extend the greedy algorithm for
a $K_r$-partition for general $r$
on interval graphs
to the class of strongly chordal graphs~\cite{kn:dahlhaus}.

\smallskip
Concerning packings of vertex-disjoint maximal cliques
we have the following theorem.

\begin{theorem}
Let $G$ be an interval graph. There exists a
linear-time algorithm that finds the maximal number of
vertex-disjoint maximal cliques.
\end{theorem}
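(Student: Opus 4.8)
The plan is to work with a consecutive clique arrangement $[C_1,\ldots,C_t]$ of $G$; by the theorem of Fulkerson and Gross quoted above such an arrangement exists, it lists \emph{all} maximal cliques of $G$, and as already remarked $t\le n$. Thus a family of vertex-disjoint maximal cliques is exactly a pairwise-disjoint subfamily of $\{C_1,\ldots,C_t\}$, and we want one of maximum size.

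The first step is the observation that, for cliques taken from a consecutive arrangement, pairwise disjointness coincides with disjointness of consecutive members. Indeed, if $i<j<k$ and $v\in C_i\cap C_k$, then the set of indices of cliques containing $v$ is an interval containing $i$ and $k$, hence it contains $j$, so $v\in C_j$; consequently $C_i\cap C_j=\es$ and $C_j\cap C_k=\es$ already force $C_i\cap C_k=\es$. So, for $i_1<\cdots<i_m$, the cliques $C_{i_1},\ldots,C_{i_m}$ are pairwise disjoint iff $C_{i_\ell}\cap C_{i_{\ell+1}}=\es$ for every $\ell$.

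The second step makes the condition ``$C_i\cap C_j=\es$'' quantitative. For a vertex $v$ let $[a_v,b_v]$ be the interval of indices of cliques containing $v$, and set $R_i=\max\{\,b_v : a_v\le i\,\}$; note that $R_i$ is nondecreasing in $i$. For $i<j$ one has $C_i\cap C_j\neq\es$ iff some $v$ satisfies $a_v\le i$ and $b_v\ge j$, that is, iff $R_i\ge j$; equivalently, $C_i$ and $C_j$ are disjoint iff $j>R_i$. Hence choosing a maximum pairwise-disjoint subfamily amounts to choosing a longest index chain $i_1<\cdots<i_m$ with $i_{\ell+1}>R_{i_\ell}$, i.e. a maximum set of pairwise disjoint intervals $[i,R_i]$ whose left endpoints $1,\ldots,t$ and right endpoints $R_1\le\cdots\le R_t$ are already in sorted order. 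The left-to-right greedy solves this: scan $i=1,\ldots,t$, keep the value $\rho$ of $R$ for the last selected clique (initially $\rho=0$), and whenever $i>\rho$ select $C_i$ and reset $\rho\leftarrow R_i$. Optimality is the standard exchange argument: given an optimal chain $j_1<\cdots<j_m$, an induction using the monotonicity of $R$ shows that the clique chosen $\ell$-th by the greedy has index at most $j_\ell$, so the greedy selects at least $m$ cliques, hence exactly the optimum.

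It remains to confirm the linear running time. A consecutive clique arrangement, together with the membership intervals $[a_v,b_v]$ of all vertices, can be produced in linear time by known interval-graph recognition algorithms. The array $(R_i)$ is then obtained by one sweep over $i=1,\ldots,t$ that maintains a running maximum of $b_v$ over the vertices $v$ with $a_v=i$; this is linear since each vertex is handled once, and the greedy scan costs $O(t)=O(n)$. The only point that needs care — and the sole mild obstacle — is that the cliques $C_i$ must never be enumerated vertex by vertex, since $\sum_i|C_i|$ can be quadratic in $n$; all information actually used is the linear-size family of membership intervals $[a_v,b_v]$, which is precisely what the recognition step already provides.
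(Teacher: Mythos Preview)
Your argument is correct, and it is genuinely different from the paper's. The paper does not attack the packing problem directly; instead it invokes the Fulkerson--Gross duality that in an interval graph the maximum number of vertex-disjoint maximal cliques equals the minimum size of a clique transversal, then reduces the transversal problem to domination by adding one simplicial vertex to each maximal clique, and finally appeals to a known linear-time domination algorithm for interval graphs. Your route is more elementary and entirely self-contained: using the consecutive clique arrangement you observe that disjointness of $C_i$ and $C_j$ with $i<j$ is equivalent to $j>R_i$, turning the packing problem into the classic activity-selection instance on the intervals $[i,R_i]$, which the left-to-right greedy solves in $O(t)$ with the standard exchange proof. What your approach buys is that it avoids both the min--max duality and the black-box domination subroutine, and it is transparently linear once the membership intervals $[a_v,b_v]$ are available; what the paper's approach buys is the connection to clique transversals and dominating sets, which is of independent interest.
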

\begin{proof}
This can be seen as follows. First recall
that the maximal number of vertex-disjoint maximal cliques in an
interval graph is equal to the minimal number of vertices that
represent all maximal cliques~\cite{kn:fulkerson}. A vertex
$x$ represents a clique $C$ if $x \in C$. A set of
vertices that together represent all maximal cliques is called
a clique-transversal.

Let $[C_1,\ldots,C_t]$ be a consecutive clique ordering of $G$.
We use the following
trick that we learned from~\cite{kn:guruswami2} to reduce the
problem to a domination problem.
Add one vertex to each maximal clique $C_i$. Then the new graph
$H$ has $n+t$ vertices and
$H$ is an interval graph.
It is easy to check that the minimal cardinality
of a clique transversal in $G$ is equal to the minimal cardinality of a
dominating set in $H$. Here, a dominating set in a graph is a
set $S$ of vertices such that every vertex not in $S$ has a neighbor
in $S$. There exists a linear-time algorithm that finds a
dominating set of minimal cardinality in an interval
graph~\cite{kn:kellog}. This proves the claim.
\qed\end{proof}

\section{Triangle partition on colored permutation graphs}

We refer to Appendix~\ref{prel perm} for a brief overview on
permutation graphs.

Notice that the triangle partition problem for permutation graphs
is equivalent to the following problem.
\begin{quote}
Let $\pi$ be a
permutation of $V=\{1,\ldots,n\}$. Decide if there exist a partition
$\mathcal{P}$
of $\{1,\ldots,n\}$ into triples such that for each element
$\{i,j,k\} \in \mathcal{P}$ with $i < j < k$,
\[\pi(i) < \pi(j) < \pi(k).\]
\end{quote}
As far as we know, both the triangle partition - and the
triangle packing problem for permutation
graphs are open. In this section we consider a variation of the
partition problem.

\begin{theorem}
Let $\pi \in S_n$ and let
\[c: \{1,\ldots,n\} \rightarrow \{1,2,3\}. \]
There exists a polynomial-time algorithm that decides whether
there exists a partition $\mathcal{P}$ of $\{1,\ldots,n\}$ into
triples such that for each $\{i,j,k\} \in \mathcal{P}$ with
$i < j< k$:
\[\pi(i) < \pi(j) < \pi(k) \quad\text{and}\quad
c(\pi(i))=1, \;\; c(\pi(2))=2 \;\;\text{and}\;\; c(\pi(3))=3.\]
\end{theorem}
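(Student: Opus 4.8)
The plan is to reduce the problem to a single maximum-flow computation. The first step is a structural observation. In any feasible partition consider a triple $\{i,j,k\}$ with $i<j<k$; since $\pi(i)<\pi(j)<\pi(k)$ the value $\pi(i)$ is the smallest of the three, so the colour conditions say exactly that the ``lowest'' point of the triple (the one of smallest index, equivalently of smallest value) has colour $1$, the middle one has colour $2$, and the ``highest'' one has colour $3$. Write $A$, $B$, $C$ for the sets of indices $\ell$ with $c(\pi(\ell))=1,2,3$ respectively. A feasible partition forces $|A|=|B|=|C|=n/3$, so unless $3\mid n$ and these three sets have equal size we immediately answer ``no''.

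Next introduce the dominance relation $\ell \prec \ell'$ meaning $\ell<\ell'$ and $\pi(\ell)<\pi(\ell')$. It is transitive, so whenever $a\in A$, $b\in B$, $c\in C$ with $a\prec b$ and $b\prec c$ we automatically get $a\prec c$, and $\{a,b,c\}$ is then a feasible triple. Hence a feasible partition is precisely a family of $n/3$ vertex-disjoint directed paths $a\to b\to c$ (with $a\in A$, $b\in B$, $c\in C$, $a\prec b$, $b\prec c$) that together cover $V$. I would encode this as a flow network $N$: a source $s$ with a unit-capacity arc to each $a\in A$; for each $b\in B$ a split pair $b_{\mathrm{in}},b_{\mathrm{out}}$ joined by a unit-capacity arc; a unit-capacity arc $a\to b_{\mathrm{in}}$ whenever $a\prec b$; a unit-capacity arc $b_{\mathrm{out}}\to c$ whenever $b\prec c$; and a unit-capacity arc from each $c\in C$ to a sink $t$. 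This has $O(n)$ vertices and $O(n^{2})$ arcs and is built in polynomial time.

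The final step is to prove that a feasible partition exists if and only if the maximum $s$–$t$ flow in $N$ has value $n/3$. The value is at most $n/3$ since the arcs leaving $s$ form a cut of that capacity. For the forward direction, a feasible partition yields $n/3$ pairwise vertex-disjoint $s$–$t$ paths, hence an integral flow of value $n/3$. For the converse, take an integral maximum flow of value $n/3$ and decompose it into $n/3$ unit paths; because the arcs out of $s$, the arcs into $t$, and the arcs $b_{\mathrm{in}}\to b_{\mathrm{out}}$ all have capacity one, these paths use pairwise distinct vertices of $A$, of $B$ and of $C$, and since $|A|=|B|=|C|=n/3$ they in fact cover all of $A\cup B\cup C=V$. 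Reading off the triples $\{a,b,c\}$ from the paths gives the partition. Running any polynomial-time max-flow algorithm finishes the proof.

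The one point that genuinely needs care — and the place where a naive two-stage bipartite matching would be wrong — is the vertex-splitting on $B$: without it, a flow of value $n/3$ could route two units through one middle point and leave another middle point uncovered, breaking the ``only if'' direction. The splitting is exactly what makes integral flows of value $n/3$ correspond bijectively to feasible partitions, so I expect that to be the main thing to get right; everything else is routine.
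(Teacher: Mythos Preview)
Your flow construction is correct and gives a valid polynomial-time algorithm, but it takes a heavier route than the paper and your closing remark is actually mistaken. The paper does precisely the ``naive two-stage bipartite matching'' you warn against: it forms the bipartite graph on $A\cup B$ with edges given by $\prec$, forms the bipartite graph on $B\cup C$ likewise, and shows that a feasible partition exists if and only if \emph{both} graphs admit a perfect matching. This works because of the transitivity you yourself noted: given perfect matchings $M_1$ on $(A,B)$ and $M_2$ on $(B,C)$, each $b\in B$ is matched to a unique $a$ in $M_1$ and a unique $c$ in $M_2$, and the triple $\{a,b,c\}$ satisfies $a\prec b\prec c$, hence $a\prec c$ automatically. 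No middle point can be used twice, since perfect matchings already touch every $b$ exactly once.

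Your worry about ``two units through one middle point'' applies to an \emph{unsplit} flow network $s\to A\to B\to C\to t$, not to two matchings computed independently. The vertex-splitting in your network is simply the flow-theoretic device that re-imposes what the two perfect matchings enforce for free. So both arguments are correct; the paper's is shorter (two bipartite-matching calls and a one-line appeal to transitivity), while yours packages the same constraints into a single max-flow instance.
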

\begin{proof}
Construct two bipartite graphs as follows. The first bipartite graph
has vertices that are the elements with colors 1 and 2. A vertex $i$
with color 1 is adjacent to a vertex $j$ with color 2 if
\[ i < j \quad\text{and}\quad \pi(i) < \pi(j).\]
The second bipartite graph has vertices with
colors 2 and colors 3. A vertex $p$ with color 2 is adjacent to a
vertex $q$ with color 3 if
\[ p < q \quad\text{and}\quad \pi(p) < \pi(q).\]
It is easy to check that there exists a partition into triangles
if and only if both bipartite graphs have a perfect matching.

One can find a perfect matching in $O(n^{\omega})$ time~\cite{kn:mucha},
where $\omega$ is the exponent of a matrix multiplication algorithm.
\qed\end{proof}

It is easy to check that this result generalizes to the
$K_r$-partitioning problem when an $r$-coloring is a part of the input.

\section{Triangle partition on complete multipartite graphs}

Complete $t$-partite graphs form a subclass of the permutation graphs.
In this section we show that the triangle partition problem can be
solved in polynomial time for complete $t$-partite graphs.

Let $a_1,\ldots,a_t$ be positive natural numbers.
We use $K(a_1,\ldots,a_t)$ to denote the complete
$t$-partite graph with  color classes $A_1,\ldots,A_t$
such that $|A_i|=a_i$ for all $i \in \{1,\ldots,t\}$.

\begin{lemma}
\label{large classes}
Let $G$ be a $t$-partite graph. If there exists a triangle partition
$\mathcal{P}$ of $G$ then there exists a triangle partition
$\mathcal{P}^{\prime}$ which contains a triangle with vertices in
three of the largest color classes.
\end{lemma}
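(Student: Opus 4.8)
The plan is to start from an arbitrary triangle partition $\mathcal{P}$ and perform a sequence of local exchanges that gradually route a triangle onto the three largest classes without changing the total number of triangles (which equals $|V(G)|/3$, since it is a partition). Order the color classes so that $a_1 \ge a_2 \ge \cdots \ge a_t$, and write $A_1, A_2, A_3$ for three largest ones. Every triangle of $\mathcal{P}$ uses three vertices lying in three distinct color classes (an edge cannot join two vertices of the same class). The goal is to produce $\mathcal{P}'$ containing some triangle $\{x_1,x_2,x_3\}$ with $x_i \in A_i$.

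First I would reduce to a counting/pigeonhole argument. Consider the multiset of "class-patterns" of the triangles in $\mathcal{P}$: each triangle contributes the $3$-element subset of $\{1,\dots,t\}$ of classes it meets. Since $|A_1| = a_1$ is at least as large as every other class, I would argue that class $1$ must appear in at least $\lceil a_1 \cdot 3 / n\rceil$ triangles, and similarly bound the number of triangles meeting $A_1$, meeting $A_2$, and meeting $A_3$; the point is that $A_1, A_2, A_3$ are each "heavily used." The key structural step is a swap argument of the same flavor as Lemma~\ref{greedy} and Lemma~\ref{all three}: if a triangle $T$ meets $A_1$ and $A_2$ but its third vertex lies in some class $A_j$ with $j \ge 4$, and if some vertex of $A_3$ is currently paired into a triangle $T'$ whose pattern includes class $j$ as well (or can be made to), then I swap the $A_j$-vertex of $T$ with the $A_3$-vertex of $T'$; completeness of the multipartite graph guarantees all the required edges are present, so both resulting triples are again triangles. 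Iterating, I push an $A_3$-vertex into a triangle already anchored on $A_1$ and $A_2$.

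The honest difficulty is handling the case where $A_1, A_2, A_3$ are so large that they "collide" — e.g.\ when $a_1$ is more than $n/3$, every triangle already meets $A_1$, and one must be careful that the greedy swaps do not merely permute which triangles are bad. I expect the clean way around this is an extremal/minimal-counterexample argument: among all maximum triangle partitions, take one minimizing the number of vertices of $A_1 \cup A_2 \cup A_3$ that lie in triangles with at least one vertex outside $A_1\cup A_2\cup A_3$ (call these "mixed" vertices), and then show that if this count is positive one can always perform a swap strictly decreasing it, contradicting minimality — unless already some all-in-$A_1A_2A_3$ triangle exists, which is exactly the conclusion. Completeness of $G$ makes every candidate swap legal, so the only thing to verify is that the potential function strictly drops; that verification is the crux and I would do it by a short case analysis on whether the partners of the swapped vertices are themselves mixed.

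A remark I would add: the same argument works verbatim for $K_r$-partitions and the $r$ largest classes, which is presumably why the paper isolates this lemma before specializing to $r=3$.
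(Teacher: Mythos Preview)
Your exchange approach is a natural instinct, but as written it has two concrete problems, and the paper sidesteps both with a much shorter induction.

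\medskip
\textbf{Gap 1: the swap is described backwards.} You say: take $T=\{x_1,x_2,y\}$ with $x_i\in A_i$ and $y\in A_j$ ($j\ge 4$), and take $T'\ni z\in A_3$ whose pattern \emph{includes} class $j$; then swap $y\leftrightarrow z$. But if $T'$ already has a vertex in $A_j$, then after the swap $T'$ contains $y\in A_j$ \emph{and} that other $A_j$-vertex, so it is not a triangle of the multipartite graph. You need $T'$ to \emph{avoid} $A_j$, not contain it. This is fixable, but you then have to argue such a $T'$ exists, which is not automatic (when $a_3=a_j$ every $A_3$-triangle could pass through $A_j$).

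\medskip
\textbf{Gap 2: the potential need not strictly drop.} Take $t=9$ with all classes of equal size $a$, and the partition that groups classes as $\{1,4,5\}$, $\{2,6,7\}$, $\{3,8,9\}$, with every triangle inside a group. No triangle lies in $A_1\cup A_2\cup A_3$, and every vertex of $A_1\cup A_2\cup A_3$ is mixed, so your potential is $3a$. Now check: any single vertex swap between two triangles either produces an invalid triple or leaves the potential at $3a$ (you can move $x_1\in A_1$ into a triangle with $x_2\in A_2$, but the third vertex is still outside, so $x_1,x_2$ remain mixed). Two swaps do succeed, and the global minimiser does have an all-in triangle, but your one-step descent argument does not prove this. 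The ``short case analysis'' you defer to is exactly the part that fails.

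\medskip
\textbf{What the paper does instead.} The paper's proof is a four-line induction on $|V|$. Pick any triangle $P\in\mathcal P$, set $G'=G-P$; by induction $G'$ has a triangle partition $\mathcal Q$ containing a triangle $Q$ in three of the largest classes of $G'$. Then $\mathcal Q\cup\{P\}$ is a triangle partition of $G$. Finally, because in a \emph{complete} multipartite graph classes of equal cardinality are interchangeable by an automorphism, one may assume the three classes of $G'$ carrying $Q$ are also among the three largest of $G$. No swaps, no potential, no case analysis.

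\medskip
So your direction can likely be salvaged (e.g.\ with a lexicographic potential, or allowing multi-step swaps), but as stated it is incomplete, and the paper's inductive argument is both correct and substantially simpler.
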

\begin{proof}
Let $A_1,\ldots,A_t$ be the color classes of $G=(V,E)$ and let
$a_i=|A_i|$ such that
\[0< a_1\leq a_2 \leq \ldots \leq a_t.\]
We prove the claim by induction on the number of vertices.
If there are only three vertices then the claim is obviously true.
Consider a triangle $P \in \mathcal{P}$
and consider the subgraph $G^{\prime}$
of $G$ induced by $V-P$. Then $G^{\prime}$ has a triangle partition
$\mathcal{P}-P$.
By induction there exists a triangle partition $\mathcal{Q}$
of $G^{\prime}$
with a triangle $Q$ contained in three of the maximal color classes
of $G^{\prime}$. Note that color classes with the same cardinality are
interchangeable. Therefore, we may assume that the three maximal classes
of $G^{\prime}$ that contain the vertices of $Q$ are also maximal
classes of $G$.
\qed\end{proof}

\begin{theorem}
There exists a linear-time algorithm that solves the
triangle partition problem on
complete $t$-partite graphs.
\end{theorem}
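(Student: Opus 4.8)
The plan is to reduce the theorem to an arithmetic test. Write $K(a_1,\ldots,a_t)$ for the input, with color classes $A_1,\ldots,A_t$ of sizes $a_i$, and set $n=\sum_i a_i$. In a complete multipartite graph a triple of vertices is a triangle exactly when the three vertices lie in three distinct color classes, so a triangle partition is a partition of $V$ into such triples. I would prove the characterization: $K(a_1,\ldots,a_t)$ has a triangle partition if and only if $3\mid n$ and $\max_i a_i \le n/3$. The algorithm then simply computes $n$ and $\max_i a_i$ and checks the two conditions; this is linear, either directly from the sequence $(a_1,\ldots,a_t)$, or, if the graph is given as a graph, after recovering the color classes in linear time as the equivalence classes of the non-adjacency relation (which is an equivalence relation precisely because the graph is complete multipartite). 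Note that $\max_i a_i \le n/3$ with $n>0$ already forces $t\ge 3$, so the cases $t\le 2$ need no separate handling.

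Necessity is immediate: a triangle partition has exactly $n/3$ triangles, so $3\mid n$, and since the largest color class is independent, each triangle meets it in at most one vertex, giving $\max_i a_i \le n/3$.

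For sufficiency I would induct on $n$, following the greedy strategy underlying Lemma~\ref{large classes}: peel off one triangle using a vertex from each of the three largest classes and recurse. The base cases $n\in\{0,3\}$ are trivial (for $n=3$ all $a_i=1$ and $t=3$). For $n\ge 6$ with $3\mid n$ and $a_t=\max_i a_i\le n/3$, one first notes $t\ge 3$ (otherwise $\max_i a_i \ge n/2 > n/3$), sorts $a_1\le\ldots\le a_t$, and removes one vertex from each of $A_{t-2},A_{t-1},A_t$; since vertices inside a class, and classes of equal size, are interchangeable, the choice is immaterial, and the three removed vertices form a triangle. The resulting complete multipartite graph $G'$ has $n-3$ vertices with $3\mid(n-3)$; its class sizes are $a_1,\ldots,a_{t-3}$ together with $a_{t-2}-1,a_{t-1}-1,a_t-1$, and one checks each is at most $(n-3)/3$, so the induction hypothesis yields a triangle partition of $G'$, which together with the peeled triangle partitions $G$.

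The one delicate inequality, and where I expect to spend the most care, is that the fourth-largest class does not become the bottleneck: the new maximum is $\max(a_{t-3},\,a_t-1)$ (with $a_{t-3}:=0$ when $t=3$); clearly $a_t-1\le n/3-1=(n-3)/3$, and if $a_{t-3}\ge n/3$ then, as $a_{t-3}\le a_{t-2}\le a_{t-1}\le a_t\le n/3$, all four equal $n/3$, forcing $\sum_i a_i\ge 4n/3>n$, a contradiction; since $n/3$ is an integer this gives $a_{t-3}\le n/3-1=(n-3)/3$, exactly what the recursion needs. Everything else is routine bookkeeping. I would close by remarking that replacing $3$ by $r$ throughout characterizes the complete multipartite graphs admitting a $K_r$-partition ($r\mid n$ and $\max_i a_i\le n/r$), solving that problem in linear time as well.
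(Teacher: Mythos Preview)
Your proof is correct and in fact goes beyond the paper. The paper's argument simply invokes Lemma~\ref{large classes} to certify that the greedy procedure---repeatedly removing a triangle from the three currently largest color classes---succeeds whenever a partition exists, and then observes that this greedy can be implemented in linear time. It explicitly does \emph{not} characterize which tuples $(a_1,\ldots,a_t)$ admit a triangle partition; see the Remark immediately following the theorem, where the authors state they have not found such a condition.

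You take a different route: you prove the characterization $3\mid n$ and $\max_i a_i\le n/3$, so the algorithm becomes a two-line arithmetic test. Your inductive proof of sufficiency is in effect the same greedy step the paper uses, but you isolate the invariant ($\max_i a_i\le n/3$) that makes the induction close, which is precisely what the paper is missing. Thus your approach both resolves the open remark in the paper and yields a simpler algorithm (no need to simulate the greedy). The extension to $K_r$-partitions ($r\mid n$ and $\max_i a_i\le n/r$) that you sketch at the end is likewise a genuine addition beyond what the paper establishes.
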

\begin{proof}
By Lemma~\ref{large classes} a greedy algorithm which chooses
recursively triangles from three of the largest color classes
produces a triangle partition if it exists. It is easy to see
that this algorithm can be implemented to run in linear time.
\qed\end{proof}

\begin{remark}
We have not found an easy condition on the numbers $a_1,\ldots,a_t$
which characterizes the complete $t$-partite graphs that have a
triangle partition.
\end{remark}

\begin{remark}
A similar greedy algorithm solves the triangle packing problem on
complete $t$-partite graphs.
\end{remark}

\section{$C_4$-Packing on bipartite permutation graphs}

Let $G=(X,Y,E)$ be a bipartite permutation graph and consider
the left-to-right orderings of the vertices of $X$ and $Y$ on
the topline of a permutation diagram. It is easy to check
that this is a strong ordering, which is defined as follows.

\begin{definition}
Let $G=(X,Y,E)$ be a bipartite graph. A {\em strong ordering\/}
is a pair of linear orderings $<_1$ and $<_2$ on $X$ and $Y$
such that for all $x_1,x_2 \in X$ and $y_1,y_2 \in Y$ with
$x_1 <_1 x_2$ and $y_1 <_2 y_2$
\[\left( (x_1,y_2) \in E \;\;\text{and}\;\;(x_2,y_1) \in E \right)
\;\;\Rightarrow\;\;
\left((x_1,y_1) \in E \;\;\text{and}\;\;(x_2,y_2) \in E\right).\]
\end{definition}

Spinrad, {\em et al.\/}, obtained
the following characterization~\cite{kn:spinrad2}.

\begin{theorem}[\cite{kn:spinrad2}]
Let $G=(X,Y,E)$ be a bipartite graph. Then $G$ is a bipartite
permutation graph if and only if there is a strong ordering on
$X$ and $Y$.
\end{theorem}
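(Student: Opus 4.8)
The plan is to prove both implications through a \emph{permutation diagram} of $G$, that is, a drawing in which each vertex is a straight segment joining a point of an upper horizontal line to a point of a lower horizontal line and two vertices are adjacent exactly when their segments cross. Recording, for such a diagram, the left-to-right order $L_1$ of the segments on the upper line and the order $L_2$ on the lower line, one has the standard reformulation: $G$ is a permutation graph if and only if there are two linear orders $L_1,L_2$ on $V(G)$ such that, for all $u,v$, $(u,v)\in E(G)$ iff $L_1$ and $L_2$ disagree on the pair $\{u,v\}$. The forward direction of the theorem is then a short unpacking of this, while the backward direction --- building such a pair $L_1,L_2$ out of a strong ordering --- carries essentially all of the content.

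For the forward direction, suppose $G=(X,Y,E)$ is a bipartite permutation graph, and let $L_1,L_2$ be as above. Since $X$ is independent, no two $X$-segments cross, so $L_1$ and $L_2$ agree on every pair of $X$-vertices and hence induce a common linear order $<_1$ on $X$; likewise they induce a common order $<_2$ on $Y$. I claim $(<_1,<_2)$ is a strong ordering. Fix $x_1<_1 x_2$ and $y_1<_2 y_2$ with $(x_1,y_2)\in E$ and $(x_2,y_1)\in E$. From $(x_1,y_2)\in E$ the orders $L_1,L_2$ disagree on $\{x_1,y_2\}$; interchanging the names $L_1,L_2$ if necessary (which leaves $<_1,<_2$ unchanged) we may assume $x_1<_{L_1}y_2$ and $y_2<_{L_2}x_1$. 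Since $x_1<x_2$ and $y_1<y_2$ in both orders, $L_2$ is forced to restrict to the chain $y_1<y_2<x_1<x_2$ on these four vertices; then $(x_2,y_1)\in E$ forces $x_2<_{L_1}y_1$, so $L_1$ restricts to $x_1<x_2<y_1<y_2$. On these two restrictions $L_1$ and $L_2$ disagree on both $\{x_1,y_1\}$ and $\{x_2,y_2\}$, so $(x_1,y_1)\in E$ and $(x_2,y_2)\in E$, as required.

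For the backward direction, let $(<_1,<_2)$ be a strong ordering of $G=(X,Y,E)$, with $X=\{x_1<_1\cdots<_1 x_p\}$ and $Y=\{y_1<_2\cdots<_2 y_q\}$. Since permutation graphs are closed under disjoint union, and the strong-ordering property is inherited by restriction to a component, I would first reduce to $G$ connected (with at least two vertices; it then has no isolated vertex). The first main step is to extract from the axiom a staircase structure: each neighbourhood $N(x_i)$ is an interval $\{y_j : \alpha_i\le j\le \beta_i\}$ of $<_2$, and $\alpha_1\le\cdots\le\alpha_p$, $\beta_1\le\cdots\le\beta_p$. For intervality, if $y_a<_2 y_b<_2 y_c$ with $y_a,y_c\in N(x_i)$ but $y_b\notin N(x_i)$, connectedness supplies a neighbour $x_k\ne x_i$ of $y_b$, and the strong-ordering implication applied to the rows $x_i,x_k$ and to the column pair $\{y_a,y_b\}$ or $\{y_b,y_c\}$ (according as $x_k<_1 x_i$ or $x_i<_1 x_k$) forces $y_b\in N(x_i)$, a contradiction; monotonicity of the $\alpha_i$ and $\beta_i$ is obtained by the same kind of $2\times 2$ argument applied to two rows $x_i<_1 x_k$. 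The second main step is the construction: let $L_1$ be the order of $V(G)$ obtained by inserting each $x_i$ into the chain $y_1<\cdots<y_q$ immediately before $y_{\alpha_i}$, and $L_2$ the order obtained by inserting each $x_i$ immediately after $y_{\beta_i}$, in both cases breaking ties among the $X$-vertices by $<_1$. Monotonicity of the $\alpha_i$ (resp.\ $\beta_i$) makes $L_1$ (resp.\ $L_2$) restrict to $<_1$ on $X$, and both restrict to $<_2$ on $Y$; and for $x_i\in X$, $y_j\in Y$ one checks directly that $L_1$ and $L_2$ disagree on $\{x_i,y_j\}$ exactly when $\alpha_i\le j\le \beta_i$, i.e.\ exactly when $(x_i,y_j)\in E$. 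Thus $(L_1,L_2)$ is a permutation diagram of $G$, so $G$ is a bipartite permutation graph.

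The principal obstacle is the backward direction, specifically the passage from the bare strong-ordering axiom to the staircase structure, together with the need to reduce to connected graphs first. That reduction is not cosmetic: a strong ordering of a disconnected graph can fail to make the neighbourhoods into intervals of $<_2$ --- for example the disjoint union of $K_{1,2}$ with an isolated vertex, where $X$ consists of a single vertex and the axiom imposes no constraint whatsoever --- and then the interleavings defining $L_1$ and $L_2$ are not even well defined. Once connectedness has been used to force intervality and monotonicity, the final verification that the two interleavings reproduce exactly $E$ and restrict correctly to $X$ and $Y$ is routine but calls for care with the boundary indices $\alpha_i,\beta_i$.
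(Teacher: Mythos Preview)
Your argument is correct. The forward direction is the same observation the paper makes just before stating the theorem (``consider the left-to-right orderings \ldots\ it is easy to check that this is a strong ordering''), and your case analysis carries it out cleanly; the WLOG swap of $L_1$ and $L_2$ is legitimate since the restrictions $<_1,<_2$ and the edge relation are symmetric in the two lines. For the backward direction your staircase argument (interval neighbourhoods with monotone endpoints, then the two interleavings) is sound, and you are right that the reduction to connected components is essential rather than cosmetic.

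There is nothing to compare against, however: the paper does not prove this theorem. It is quoted from Spinrad, Brandst\"adt and Stewart~\cite{kn:spinrad2} and used as a black box; the only hint the paper gives is the one-sentence remark about the forward direction. Your proof of the backward direction is essentially the construction in that original reference, so you have reconstructed the cited result rather than matched anything in the present paper.
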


In this section we show that there is a greedy algorithm that
computes a $C_4$-packing on bipartite permutation graphs.
Consider a diagram for a bipartite permutation graph $G=(X,Y,E)$
and let $<$ be the left-to-right ordering of the points on the
topline. Denote by $<_1$ and $<_2$ the sub-orderings of $<$
induced by the vertices of $X$ and $Y$.

\begin{lemma}
\label{right}
Let
$\mathcal{P}$ be a $C_4$-packing on $G$. Assume that $a,c \in X$
with $a <_1 c$ and assume that $a$ and $c$ are in
a square $C \in \mathcal{P}$.
Let $a <_1 b <_1 c$ and assume that
$b$ is the smallest element $>_1 a$.
Then there is a packing $\mathcal{P}^{\prime}$
of the same cardinality as $\mathcal{P}$ such that $a$ and $b$ are in
a square $C^{\prime} \in \mathcal{P}^{\prime}$.
\end{lemma}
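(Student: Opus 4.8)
The plan is to exhibit $\mathcal{P}'$ explicitly by a local exchange at the square $C$ containing $a$ and $c$, using the strong ordering to guarantee that the rewired square is still a $C_4$. Write $C=\{a,c,y,y'\}$ with $y,y'\in Y$ and (say) $y<_2 y'$, so that $C$ is the $4$-cycle $a\text{-}y\text{-}c\text{-}y'\text{-}a$; in particular $(a,y),(a,y'),(c,y),(c,y')\in E$. First I would handle the easy case: if $b$ is not covered by any square of $\mathcal{P}$, then since $a<_1 b<_1 c$ and $(a,y'),(c,y)\in E$ with $y<_2 y'$, the strong ordering property applied to the pair $a<_1 b$ (using $(a,y'),(c,y)$ and the fact that $b$ lies between $a$ and $c$) forces $(b,y),(b,y')\in E$, hence $\{a,y,b,y'\}$ is a square; replace $C$ by it and leave everything else fixed. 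Here the key point is that $b$ being the immediate successor of $a$ in $<_1$ does not really matter for adjacency — any $X$-vertex strictly between $a$ and $c$ is forced to be adjacent to both $y$ and $y'$ by strongness.

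The substantive case is when $b$ is already covered by some square $D\in\mathcal{P}$, $D\neq C$. Write $D=\{b,z,e,z'\}$ where $z,z'\in Y$ with $z<_2 z'$ and $e\in X$, so $D$ is the cycle $b\text{-}z\text{-}e\text{-}z'\text{-}b$ and $(b,z),(b,z')\in E$. The idea is to swap $c$ and $b$ between the two squares: replace $C,D$ by $C'=\{a,b,y,y'\}$ and $D'=\{c,e,z,z'\}$. We already argued $C'$ is a square. For $D'$ I would argue that $(c,z),(c,z')\in E$: this should follow from strongness applied to the pair $b<_1 c$ — we have $(b,z)\in E$ on the "outer" side and we need a crossing edge from $c$ to something $<_2 z$ or from $b$ to something, together with an edge witnessing the implication. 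The clean way is to pick, among $z,z',y,y'$, the right two endpoints so that the hypothesis of the strong-ordering implication is met; concretely, since $(c,y)\in E$ and $(b,z)\in E$, comparing $y$ and $z$ in $<_2$ and using $b<_1 c$ gives in each case (whether $y<_2 z$, $y=z$, or $z<_2 y$) that $c$ is adjacent to whichever of $y,z$ is $<_2$-smaller and $b$ to whichever is larger; iterating this bookkeeping over $\{y,y',z,z'\}$ yields $(c,z),(c,z')\in E$ and we are done, with $|\mathcal{P}'|=|\mathcal{P}|$ since we only rewired two squares into two squares.

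I expect the main obstacle to be the case analysis in this second step: one must be careful that $D'=\{c,e,z,z'\}$ is genuinely a $4$-cycle and not a degenerate configuration, and that no vertex is used twice — this is fine because $C$ and $D$ were disjoint and we only permuted their vertex sets. A secondary subtlety is that the lemma's hypothesis "$b$ is the smallest element $>_1 a$" is used only to make the statement about $a$ and $b$ specifically; the exchange arguments themselves go through for any $b$ with $a<_1 b<_1 c$, so I would remark that the stronger statement holds and the named version is an immediate corollary. Throughout, the only tool is the strong-ordering implication from the definition together with the two theorems of Spinrad et~al.\ and the structure of squares as $4$-cycles alternating between $X$ and $Y$.
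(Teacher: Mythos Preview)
Your proof has a genuine gap in the ``substantive case'' where $b$ lies in a second square $D=\{b,e,z,z'\}$. You propose the single exchange $C,D \mapsto \{a,b,y,y'\},\{c,e,z,z'\}$ and claim that $(c,z),(c,z')\in E$ follows by ``iterating the bookkeeping'' with the strong ordering. This does not work. The strong ordering, applied with $b<_1 c$, lets you push an adjacency of $c$ \emph{upward} in $<_2$: from $(c,y)$ and $(b,z)$ with $y<_2 z$ you indeed obtain $(c,z)$. But it gives you nothing in the downward direction: if $z<_2 y$ (and likewise $z'<_2 y$), there is no pair of crossing edges available to force $(c,z)\in E$, and in fact the edge can be absent.

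A concrete configuration where your swap fails: take $X=\{a,b,d,c\}$ in $<_1$-order (so $e=d$ lies strictly between $b$ and $c$) and $Y=\{r,s,p,q\}$ in $<_2$-order, with $N(r)=N(s)=\{a,b,d\}$ and $N(p)=N(q)=\{a,b,d,c\}$. One checks directly that this satisfies the strong ordering, so it is a bipartite permutation graph. With $C=\{a,c,p,q\}$ and $D=\{b,d,r,s\}$ your proposed $D'=\{c,d,r,s\}$ is not a square, since $c\notin N(r)$. Here the correct replacement is $\{a,b,r,s\}$ and $\{c,d,p,q\}$: the $Y$-vertices must be redistributed, not kept with their original $X$-pairs.

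This is exactly what the paper's proof does and what your outline misses. The paper splits on the position of the second $X$-vertex $d$ of $D$ relative to $c$. When $b<_1 d<_1 c$ it argues, via the diagram, that each of $r,s$ is adjacent to at least one of $a,c$; depending on which, the two new squares may be $\{a,b,r,s\},\{c,d,p,q\}$, or $\{a,b,p,q\},\{c,d,r,s\}$, or even the mixed pair $\{a,b,p,r\},\{c,d,q,s\}$. Only in the easy case $c<_1 d$ does the simple swap you describe go through. Your remark that the ``immediate successor'' hypothesis on $b$ is inessential is correct, but it does not rescue the argument: the obstacle is the relative position of $e$ and the $<_2$-order of $\{y,y'\}$ versus $\{z,z'\}$, neither of which your exchange takes into account.
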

\begin{proof}
Let $C=\{a,c,p,q\}$.
First notice that for each vertex $y \in Y$, its neighborhood $N(y)$
forms an interval in $(X,<_1)$~\cite{kn:spinrad2}. Then
\[p,q \in N(a) \cap N(c) \;\;\text{and}\;\; a <_1 b <_1 c
\quad\Rightarrow\quad p,q \in N(a) \cap N(b).\]
Thus $\{a,b,p,q\}$ is a square. If $b$ is not in any square of $\mathcal{P}$
then we can replace $C$ with $C^{\prime}=\{a,b,p,q\}$.
Assume that $b$ is in a square $C_2=\{b,d,r,s\} \in \mathcal{P}$.
We consider the following cases.

First assume that $a <_1 b <_1 d <_1 c$.
Then $p$ and $q$ are adjacent to
$a$, $b$, $c$ and $d$.
Since $r$ and $s$ intersect the linesegments of $b$ and $d$ but not the
linesegments of $p$ and $q$ each of $r$ and $s$ intersects at least one of
$a$ and $c$. If $r$ and $s$ both intersect $a$ then we can replace
$C$ and $C_2$ with $\{a,b,r,s\}$ and $\{c,d,p,q\}$. Similarly, of
both $r$ and $s$ intersect $c$ we can replace $C$ and $C_2$ with
$\{a,b,p,q\}$ and $\{c,d,r,s\}$. Assume that $r$ intersects
$a$ and that $s$ intersects $c$. Then replace $C$ and $C_2$ with
$\{a,b,p,r\}$ and $\{c,d,q,s\}$.

Now assume that $a <_1 b <_1 c <_1 d$.
Then
\[ b <_1 c <_1 d \;\;\text{and}\;\; r,s \in N(b) \cap N(d)
\quad \Rightarrow \quad r,s \in N(c) \cap N(d).\]
Thus $\{c,d,r,s\}$ is a square. Replace
$C$ and $C_2$ with $\{a,b,p,q\}$ and $\{c,d,r,s\}$.

This proves the lemma.
\qed\end{proof}

By Lemma~\ref{right}, there
exists a maximum $C_4$-packing $\mathcal{P}$
of $G$ such that the pairs of vertices of
$X$ that are contained in a square of $\mathcal{P}$ are
consecutive
in $<_1$ and the pairs of vertices of $Y$ that are contained in a
square of $\mathcal{P}$ are consecutive in $<_2$.

\begin{lemma}
\label{left}
Consider two squares $C_1=\{a,b,r,s\}$ and $C_2=\{c,d,p,q\}$
in a $C_4$-packing $\mathcal {P}$.
Assume that $a < b < c < d$ and that $p < q < r < s$.
There exists a packing $\mathcal{P}^{\prime}$ of the
same cardinality as $\mathcal{P}$ such that $\{a,b,p,q\}$ and
$\{c,d,r,s\}$ are squares in $\mathcal{P}^{\prime}$.
\end{lemma}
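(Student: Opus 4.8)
The plan is to prove the stronger claim that the eight vertices $a,b,c,d\in X$ and $p,q,r,s\in Y$ induce a complete bipartite graph $K_{4,4}$. Once this is known, both $\{a,b,p,q\}$ and $\{c,d,r,s\}$ are squares, and since $\{a,b,p,q\}\cup\{c,d,r,s\}=C_1\cup C_2$, replacing the squares $C_1,C_2$ in $\mathcal{P}$ by these two squares produces a $C_4$-packing $\mathcal{P}^{\prime}$ of the same cardinality that coincides with $\mathcal{P}$ on all other squares; in particular $\mathcal{P}^{\prime}$ is again a packing.

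First I would collect the edges that come for free. A $C_4$ in a bipartite graph whose two sides are $\{a,b\}$ and $\{r,s\}$ must contain all four edges $(a,r),(a,s),(b,r),(b,s)$, so from $C_1$ we get $r,s\in N(a)\cap N(b)$; likewise $C_2$ gives $p,q\in N(c)\cap N(d)$. Also, the hypotheses $a<b<c<d$ and $p<q<r<s$ restrict on $X$ and $Y$ to $a<_1b<_1c<_1d$ and $p<_2q<_2r<_2s$, so in particular $a<_1c$, $b<_1d$, $p<_2r$ and $q<_2s$.

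The heart of the proof is four applications of the strong-ordering property. With $x_1=a$, $x_2=c$ and $y_1=p$, $y_2=r$ the crossing edges $(a,r)$ and $(c,p)$ are present, so the property yields $(a,p)\in E$ and $(c,r)\in E$. With $x_1=a$, $x_2=c$, $y_1=q$, $y_2=s$ the crossing edges $(a,s),(c,q)$ give $(a,q),(c,s)\in E$. Symmetrically, $x_1=b$, $x_2=d$, $y_1=p$, $y_2=r$ with crossing edges $(b,r),(d,p)$ gives $(b,p),(d,r)\in E$, and $x_1=b$, $x_2=d$, $y_1=q$, $y_2=s$ with crossing edges $(b,s),(d,q)$ gives $(b,q),(d,s)\in E$. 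Together with the eight edges of $C_1$ and $C_2$ these are all sixteen pairs in $\{a,b,c,d\}\times\{p,q,r,s\}$, so the induced subgraph is $K_{4,4}$ and the lemma follows.

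I do not anticipate a genuine difficulty: the only thing that needs attention is the bookkeeping, namely choosing for each edge we want to create a quadruple $(x_1,x_2,y_1,y_2)$ with $x_1<_1x_2$ and $y_1<_2y_2$ such that the two crossing edges $(x_1,y_2),(x_2,y_1)$ already belong to $C_1$ and $C_2$, so that the implication hands us exactly the missing edge. The four quadruples above do this, and the inequalities $a<_1c$, $b<_1d$, $p<_2r$, $q<_2s$ needed to invoke the property all follow at once from the given total orders; note that, unlike in Lemma~\ref{right}, no case analysis on the relative order of $p,q,r,s$ is needed since the hypothesis already fixes it.
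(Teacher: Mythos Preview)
Your proof is correct, and in fact establishes the same intermediate conclusion as the paper---that the eight vertices induce a $K_{4,4}$---but by a genuinely different route. The paper argues geometrically from the permutation diagram: it splits into the cases $d<p$ and $s<a$ in the topline ordering, and in each case reasons about which line segments must intersect (using that the segments of $p,q,r,s$ are pairwise non-crossing) to conclude that every segment among $p,q,r,s$ crosses every segment among $a,b,c,d$. Your argument instead appeals directly to the strong-ordering characterization of bipartite permutation graphs and derives the eight missing edges by four applications of that axiom, with no case analysis at all.

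What each approach buys: the paper's diagram argument is visually intuitive once one pictures the segments, but it carries the burden of explaining why $d<p$ and $s<a$ exhaust the possibilities (a point the paper does not spell out). Your approach sidesteps that issue entirely, needing only the four order relations $a<_1 c$, $b<_1 d$, $p<_2 r$, $q<_2 s$, all of which are immediate from the hypotheses; it is shorter, purely combinatorial, and arguably more robust. The paper's route, on the other hand, keeps the whole section in the language of the diagram, which may read more uniformly alongside Lemma~\ref{right}.
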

\begin{proof}
Assume that $d<p$.
Since the linesegments of $p$ and $q$ intersect the linesegments
of $c$ and $d$, and $r$ and $s$ intersect $a$ and $b$, and since
the linesegments of $p$, $q$, $r$ and $s$ are parallel, by the ordering
the linesegments of $p$, $q$, $r$ and $s$ intersect all linesegments
of $a$, $b$, $c$ and $d$. Thus $\{c,d,r,s\}$ and $\{a,b,p,q\}$ are
squares. The only other possible case is where $s < a$. This case is
similar.

This proves the lemma.
\qed\end{proof}

\begin{theorem}
There exists a linear-time algorithm which computes a maximum
$C_4$-packing in a bipartite permutation graph.
\end{theorem}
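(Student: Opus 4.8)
The plan is to convert the exchange arguments behind Lemmas~\ref{right} and~\ref{left} into a greedy sweep over the two orderings, using throughout the fact (already exploited in their proofs) that in a bipartite permutation graph every $N(y)$ is an interval in $(X,<_1)$ and, symmetrically, every $N(x)$ is an interval in $(Y,<_2)$. Let $G'$ denote the current residual graph; it is again a bipartite permutation graph with the induced strong ordering, so all earlier results apply to it. The greedy step is: let $a<_1 b$ be the two smallest vertices on the $X$-side of $G'$; since $N(a)\cap N(b)$ is an interval in $(Y,<_2)$, either $|N(a)\cap N(b)|\le 1$, in which case we delete $a$ and recurse, or $|N(a)\cap N(b)|\ge 2$, in which case we take its two smallest elements $p<_2 q$, output the square $\{a,b,p,q\}$, delete these four vertices, and recurse. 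We stop once the $X$-side has fewer than two vertices.

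Correctness I would prove by induction on $|V(G')|$, via two claims. \emph{The delete step is safe}: if $|N(a)\cap N(b)|\le 1$ then no maximum $C_4$-packing of $G'$ covers $a$. Indeed, a square covering $a$ contains a second $X$-vertex $c>_1 a$, so, applying Lemma~\ref{right} with $b$ the immediate $<_1$-successor of $a$ (the case $b=c$ being trivial), we get a packing of the same size whose square on $a$ is $\{a,b,y,y'\}$; then $y,y'$ are two distinct vertices of $N(a)\cap N(b)$, a contradiction. Hence $G'-a$ still admits a maximum-size packing. \emph{The commit step is safe}: if $|N(a)\cap N(b)|\ge 2$ with two smallest elements $p<_2 q$, then $G'$ has a maximum $C_4$-packing containing $\{a,b,p,q\}$, so deleting that square lowers the optimum by exactly $1$. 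To see this I would start from a maximum packing in the canonical form supplied by Lemmas~\ref{right} and~\ref{left} and the observation following Lemma~\ref{right} (consecutive $X$-pairs, consecutive $Y$-pairs, matched in order). If it covers $a$, Lemma~\ref{right} lets us assume its square on $a$ is $\{a,b,y,y'\}$ with $y,y'\in N(a)\cap N(b)$; a mirror-image exchange on the $Y$-side --- the analogue of Lemma~\ref{right} with $X$ and $Y$ interchanged, using that $N(a)\cap N(b)$ is an interval in $<_2$ with least elements $p,q$, and Lemma~\ref{left} to undo any crossing this introduces --- then replaces $y,y'$ by $p,q$ without changing the size. If it does not cover $a$, one first notes it must cover at least one of $b,p,q$ (otherwise $\{a,b,p,q\}$ could be appended), and then reroutes the at most three affected squares, again by interval swaps, to liberate $\{a,b,p,q\}$ while keeping the size fixed. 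Granting both claims, an optimal packing of $G'$ is obtained from the first output square together with the inductively computed optimal packing of the smaller residual graph.

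For the running time, from the diagram one precomputes in linear time the $<_1$-endpoints of $N(y)$ for all $y\in Y$ and the $<_2$-endpoints of $N(x)$ for all $x\in X$. By the staircase property of strong orderings, as $x$ advances to its $<_1$-successor the endpoints of its $<_2$-neighbourhood interval move monotonically rightward, so over the whole run the left endpoint of the interval $N(a)\cap N(b)$ examined by the greedy only increases in $<_2$; a single pointer into $(Y,<_2)$, advanced monotonically, then locates $p$ and $q$ (or detects that fewer than two common neighbours remain) in amortized $O(1)$ per step. Each deletion is $O(1)$, so the algorithm is linear.

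The step I expect to be the real obstacle is proving the commit step is safe, and inside it the $Y$-side exchange: Lemma~\ref{right} already handles sliding the $X$-pair of $a$'s square down to $\{a,b\}$, but pushing the $Y$-pair down to the two smallest common neighbours is a fresh exchange whose case analysis parallels --- and must be interleaved with --- those of Lemmas~\ref{right} and~\ref{left} (the latter to repair the $X$--$Y$ pairing that the $Y$-slide can disturb). The delicate sub-case is when $|N(a)\cap N(b)|\ge 2$ yet no maximum packing happens to cover $a$ (this genuinely occurs): there a purely local swap does not suffice and one must reroute several squares simultaneously.
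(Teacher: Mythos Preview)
Your algorithm is the paper's, and your overall plan (pass to the canonical form supplied by Lemmas~\ref{right} and~\ref{left}) is also the paper's. Where you diverge is in how you argue the commit step, and there you are working much harder than necessary; the ``delicate sub-case'' you flag does not actually arise.

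The clean argument is this. From the staircase property one checks that $p$ is the smallest $Y$-vertex of the residual graph lying in any square at all: any $y<_2 p$ is either outside $N(a)$ (hence outside $N(x)$ for every $x\ge_1 a$) or in $N(a)\setminus N(b)$ (hence outside $N(x)$ for every $x\ge_1 b$); either way $y$ has at most one $X$-neighbour. Thus $a,b$ are the two smallest $X$-vertices, and $p,q$ the two smallest $Y$-vertices, that lie in any square. Now take a maximum packing $\mathcal P$ in canonical form and let $C_1$ be its square with the smallest $X$-pair; by the matched-in-order property, $C_1$ also carries the smallest covered $Y$-pair. Each of $a,b,p,q$ is then either uncovered or is one of the two smallest covered vertices on its side, hence lies in $C_1$. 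Therefore $C=\{a,b,p,q\}$ meets \emph{at most the single square} $C_1$, and $(\mathcal P\setminus\{C_1\})\cup\{C\}$ is a packing of the same size (while $C\cap C_1=\varnothing$ would let you enlarge $\mathcal P$, contradicting maximality). This is exactly the paper's ``replace $C_1$ with $C$'' step.

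So there is no need for a separate $Y$-side slide, no interleaving with Lemma~\ref{left}, and no rerouting of ``at most three affected squares'': at most one square is affected. Your sequential-swap route is also fragile because after the first swap the packing may no longer be in canonical form, so the next swap is not automatically licensed. Your delete-step argument is fine and can be sharpened to a stronger statement: if $|N(a)\cap N(b)|\le 1$ then $a$ lies in no square whatsoever, since $N(a)\cap N(x)\subseteq N(a)\cap N(b)$ for every $x\ge_1 b$. Your linear-time analysis is essentially correct.
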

\begin{proof}
Let $G=(X,Y,E)$ be a bipartite permutation graph and
let $<$ the the ordering of the vertices on the topline of a
diagram for $G$. We prove that there exists a
maximum packing $\mathcal{P}$
such that the first four vertices in the ordering $<$ that form a
square are in $\mathcal{P}$. Consider the first four
vertices $C=\{x_1,x_2,y_1,y_2\}$ that form a square. We may assume
that $x_1<x_2<y_1<y_2$. Assume that $C \not\in \mathcal{P}$.
Consider the square
$C_1=\{x_1^{\prime},x_2^{\prime},y_1^{\prime},y_2^{\prime}\}$
in $\mathcal{P}$ with the smallest
vertices in $X$.
If $C \cap C_1 =\es$ then $C$ is disjoint from all squares
in $\mathcal{P}$ which contradicts the maximality of $\mathcal{P}$.
In all other cases we can replace $C_1$ with $C$.

This proves the correctness of the following algorithm.
Remove vertices that are smallest in the $<$-ordering that are
not in any square of $G$. Assume next that the smallest element
is $x_1 \in X$. Thus $x_1$ is in a square. Take the first
element $x_2 \in X$ that is in a square with $x_1$. Then take the
first two elements $y_1,y_2 \in Y$ such that $C=\{x_1,x_2,y_1,y_2\}$
induces a square in $G$. Put $C$ in $\mathcal{P}$. Remove the
vertices $C$ from $G$ and recurse. It is easy to see that,
with some care this algorithm
can be implemented to run in linear time.
\qed\end{proof}

\section{NP-Completeness for $C_4$-packing on bipartite graphs}

In the previous section we proved that the $C_4$-packing problem on
bipartite permutation graphs can be solved in linear time.
In this section we show that the $C_4$-packing problem is 
NP-complete for general bipartite graphs.

\begin{theorem}
\label{NP-c}
$C_4$-Packing on bipartite graphs is NP-complete.
\end{theorem}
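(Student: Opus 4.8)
The plan is as follows. Membership in NP is immediate: a collection of $k$ pairwise vertex-disjoint $4$-cycles is a certificate of linear size, and checking that each listed quadruple contains a $C_4$ of $G$, that the quadruples are pairwise disjoint, and that there are at least $k$ of them all takes polynomial time. The real work is the hardness proof, which I would obtain by a reduction from a known NP-complete packing/covering problem with a ternary flavour --- most naturally $3$-dimensional matching (perfect matching in tripartite $3$-uniform hypergraphs), or, if it makes the bookkeeping cleaner, exact cover by $3$-sets.

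Given a $3$DM instance with element sets $W,X,Y$ (each of size $n$) and triples $t_1,\dots,t_m$, I would build a bipartite graph $G$ consisting of one small private ``element sub-gadget'' per element, together with a ``triple gadget'' $D_j$ for each $t_j=\{w,x,y\}$. The gadget $D_j$ should admit exactly two relevant local configurations in any $C_4$-packing: a \emph{select} state, whose $4$-cycles occupy $w,x,y$ together with the private vertices of $D_j$, and a \emph{reject} state, whose $4$-cycles occupy only private vertices of $D_j$ and leave $w,x,y$ untouched for other gadgets to use. One then calibrates a target $k$, as a function of $n$, $m$ and the gadget sizes, so that $G$ has a $C_4$-packing of size $\ge k$ if and only if $n$ of the gadgets can be put simultaneously into the select state --- which is exactly the statement that the corresponding $n$ triples form a perfect $3$-matching. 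The easy direction is then just reading off $4$-cycles gadget by gadget; the converse is an exchange/normalization argument that starts from an arbitrary maximum packing, performs local replacements until every gadget sits in one of its two canonical states, and concludes that meeting the target forces $n$ gadgets into the select state with pairwise element-disjoint selected triples.

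The main obstacle is the design and analysis of the triple gadget, and it has three intertwined facets. First, no two element sub-gadgets may be wired directly to one another: every link between an element and a triple gadget must be routed through private vertices (say a private $K_{2,2}$, or a short private bipartite path), for otherwise ``mixed'' $4$-cycles straddling several gadgets appear and destroy the correspondence. Second, there are rigid parity constraints, since in a bipartite graph every $C_4$ uses exactly two vertices from each colour class: across the two states of $D_j$ the numbers of private vertices consumed must agree modulo $4$ (and be balanced between the two colour classes), which dictates how many ``ports'' the gadget exposes and how much private filler it needs; reconciling these counts with ``three elements per triple'' is the delicate point, and is precisely why one usually represents an element by a small sub-gadget rather than a single vertex. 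Third, one must rule out unintended packings, in particular guaranteeing that partially activating a gadget --- covering one or two of $w,x,y$ but not all three --- is never advantageous; this is what the rigidity of the gadget, together with the calibration of $k$, is meant to force. Once the gadget is pinned down, the remaining verification is routine.

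I expect essentially all the difficulty to be concentrated in producing one concrete small bipartite gadget that meets all three requirements at once; after that, both directions of the equivalence and the NP-completeness conclusion follow in a standard way.
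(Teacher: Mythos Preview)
Your plan coincides with the paper's: reduce from $3$-dimensional matching, give each triple a small bipartite gadget admitting a \emph{select} state (covering its three elements) and a \emph{reject} state (covering only private vertices), and calibrate the target so that hitting it forces exactly $q$ gadgets into select on pairwise element-disjoint triples. The paper even resolves your parity worry much as you anticipate --- not by an element sub-gadget, but by inserting a single auxiliary vertex $v_{xy}$ for every pair $(x,y)$ that occurs in some triple (so that $x\,v_{xy}\,y$ is a path of length two). Each triple $\tau=(x,y,z)$ then receives four private vertices $a_\tau[1],\ldots,a_\tau[4]$ forming a $4$-cycle, with $a_\tau[1]$ adjacent to both $x$ and $y$, and with $a_\tau[2],a_\tau[4]$ adjacent to $z$. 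The reject state packs the single square on the $a_\tau[i]$; the select state packs the two squares $\{x,v_{xy},y,a_\tau[1]\}$ and $\{a_\tau[2],a_\tau[3],a_\tau[4],z\}$. The target is $k=q+|M|$: every triple contributes at least one $C_4$, and the extra $q$ squares can be achieved only by putting $q$ gadgets into select, which is precisely a $3$-dimensional matching.

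What is missing from your write-up is exactly this concrete gadget. You say yourself that this is where the difficulty lies, and without it the proposal is a (correct) outline rather than a proof. Note also that elements remain single vertices in the paper's construction, and that the two gadget states deliberately yield \emph{different} numbers of $C_4$'s (one versus two); this discrepancy is the engine of the counting argument, so one should not try to equalize the $C_4$-counts across states.
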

\begin{proof}
It is easy to see that the problem is in NP.
To show the NP-hardness for our problem we use a reduction
from the 3-dimensional matching problem (3DM) which is described
as follows. 
Suppose we are given three sets $X, Y$ and $Z$
such that $|X|=|Y|=|Z|=q$, and a set $M \subseteq X \times Y
\times Z$ of triples $(x,y,z)$. The 3DM problem asks for a subset
$M^{\prime}$
of $M$ such that each element of $X, Y$ and $Z$ is contained
in exactly one triple in $M^{\prime}$.

We apply local replacements to the input instance of 3DM.
For each pair $x$ and $y$ that appear in some triple $(x,y,z) \in M$,
we create a vertex $v_{xy}$ and a path of length two 
$[x,v_{xy},y]$ as shown
in Figure~\ref{fig:replace}.

\begin{figure}
  \centering
  \includegraphics[width=.35\columnwidth]{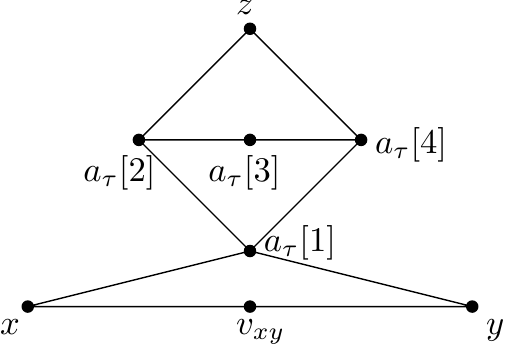}
  \caption{Local replacement for triple $\tau = (x,y,z)$ in $M$.}
  \label{fig:replace}
\end{figure}

For each triple $\tau=(x,y,z) \in M$, we create
four local vertices 
\[a_{\tau}[i] \quad 1\leq i \leq 4\] 
and eight edges
as shown in Figure~\ref{fig:replace}. 
This finishes the description of
the construction of the bipartite graph $G$. The bipartite graph $G$   
is obtained in linear time.

\smallskip 
Suppose that $M^{\prime}$ is a solution of the 3DM problem.
We obtain a $C_4$-packing of size $p = q + |M|$ as follows.
The packing is constructed by taking 4-cycles
\[\begin{cases}
\{x, v_{xy}, y, a_{\tau}[1]\} \;\;\text{and}\;\;
\{a_{\tau}[2],a_{\tau}[3], a_{\tau}[4],z\}
& \quad\text{if}\quad \tau=(x,y,z) \in M^{\prime}\\
\{a_{\tau}[1], a_{\tau}[2], a_{\tau}[3], a_{\tau}[4]\}
& \quad\text{if}\quad \tau \not\in M^{\prime}. 
\end{cases}\]
This ensures that each element in $X \cup Y \cup Z$ is included in exactly
one 4-cycle in the packing.

Now assume that there is a $C_4$-packing of size $p$, 
where $p = q + |M|$.
In the constructed figure for any triple $\tau=(x,y,z) \in M$,
there are only two possible ways to pack the 4-cycles:
one way contains the 4-cycle induced by 
\[\{a_{\tau}[1], a_{\tau}[2], a_{\tau}[3], a_{\tau}[4] \}\] 
and
the other way contains the two 4-cycles induced by
\[\{x, v_{xy}, y, a_{\tau}[1] \}
\quad\text{and}\quad 
\{a_{\tau}[2], a_{\tau}[3], a_{\tau}[4], z \}.\]
The first choice contains none of vertices from $X \cup Y \cup Z$
and the second contains exactly one vertex from each of
$X, Y$ and $Z$.
Our packing is of size $p=q+|M|$ so there must be at least $q$ triples
that use the second kind of $C_4$'s in the packing.
Since none of these $q$ triples have a common element, they cover
all $3q$ elements in $X \cup Y \cup Z$. Thus these $q$ triples forms a
3-dimensional matching.
\qed\end{proof}

\section{Triangle partition on cobipartite graphs}

A cobipartite graph is the complement of a bipartite graph.
We denote a bipartite graph $G$ with color classes $A$ and $B$
by $G=(A,B,E)$. We use the same notation for a cobipartite
graph where $A$ and $B$ are the color classes of the complement.
A star is a bipartite graph $G=(A,B,E)$ with $|A|=1$. The single
vertex in
$A$ is called the center of the star.

In this section we show that there is a good characterization of
the cobipartite graphs that have a triangle partition.

\begin{theorem}
\label{cobipartite}
Let $G=(A,B,E)$ be cobipartite.
Then $G$ can be partitioned into
triangles if and only if one of the following holds true.
\begin{enumerate}[\rm (i)]
\item $|A| \bmod 3 = |B| \bmod 3=0$, or
\item
$|A| \bmod 3 =1$ and $|B| \bmod 3 =2$
and $G$ has a triangle with one vertex in $A$ and two in $B$, or
\item similar as above with the role of $A$ and $B$ interchanged.
\end{enumerate}
\end{theorem}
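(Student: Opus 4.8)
The plan is to start from the combinatorial skeleton. Since $G=(A,B,E)$ is cobipartite, both $A$ and $B$ induce cliques, so every triangle of $G$ falls into one of four types according to how it splits across $A$ and $B$: type $AAA$, $AAB$, $ABB$, or $BBB$. Given any triangle partition $\mathcal P$, let $n_{AAA},n_{AAB},n_{ABB},n_{BBB}$ count the triangles of each type; then $|A|=3n_{AAA}+2n_{AAB}+n_{ABB}$ and $|B|=n_{AAB}+2n_{ABB}+3n_{BBB}$, so $|A|\equiv 2n_{AAB}+n_{ABB}$ and $|B|\equiv n_{AAB}+2n_{ABB}\pmod 3$, and adding these gives $|A|+|B|\equiv 0\pmod 3$. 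Hence $(|A|\bmod 3,\,|B|\bmod 3)\in\{(0,0),(1,2),(2,1)\}$, so the mod-condition of one of (i)--(iii) always holds; moreover in the $(1,2)$ case subtracting the two congruences yields $n_{AAB}-n_{ABB}\equiv -1\pmod 3$, so $\mathcal P$ contains at least one triangle that crosses the bipartition, and if it has no $ABB$ triangle then $n_{AAB}\equiv 2\pmod 3$, hence $n_{AAB}\ge 2$ (the $(2,1)$ case is symmetric).

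The sufficiency direction is short. If (i) holds, partition the clique $A$ into $|A|/3$ triangles and the clique $B$ into $|B|/3$ triangles. If (ii) holds, pick a triangle $T=\{a,b,b'\}$ with $a\in A$ and $b,b'\in B$; then $|A\setminus\{a\}|$ and $|B\setminus\{b,b'\}|$ are both divisible by $3$, so partition the remaining vertices of the two cliques into triangles and adjoin $T$. Case (iii) is symmetric.

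The substantive part is the converse of (ii) (and, symmetrically, of (iii)): assuming $G$ admits a triangle partition $\mathcal P$ with $|A|\bmod 3=1$ and $|B|\bmod 3=2$, show that $G$ contains an $ABB$ triangle. By the count above, if $\mathcal P$ already has an $ABB$ triangle we are done, so assume every crossing triangle of $\mathcal P$ is of type $AAB$; then $\mathcal P$ has two disjoint $AAB$ triangles, say $\{a_1,a_2,b_1\}$ and $\{a_3,a_4,b_2\}$. Since $b_1b_2\in E$ (as $B$ is a clique), if some $a_i$ ($i\le 4$) is adjacent to both $b_1$ and $b_2$ then $\{a_i,b_1,b_2\}$ is the desired $ABB$ triangle of $G$. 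Otherwise I would run an exchange argument in the spirit of Lemma~\ref{all three} and Lemma~\ref{right}, rearranging $\mathcal P$ locally — trading vertices between these two $AAB$ triangles and neighbouring $AAA$, $BBB$, or $AAB$ triangles of the partition — so as to produce a triangle partition containing a crossing triangle of type $ABB$, contradicting the assumption. The main obstacle is precisely making this rearrangement go through: a naive swap confined to the six vertices $a_1,\dots,a_4,b_1,b_2$ need not create an $ABB$ triangle, so the argument must reach into a third triangle of $\mathcal P$ and exploit both the clique structure on $A$ and on $B$ and the bipartite structure of $\overline G$ (e.g.\ how the neighbourhoods of $B$-vertices inside $A$ interact). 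I expect this global exchange step — ensuring one can always break an $AAB$ triangle so as to free up an $ABB$ triangle — to require the most care, while everything else is bookkeeping.
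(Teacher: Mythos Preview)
Your sufficiency argument and the modular bookkeeping are fine and match the paper's. The paper's route for the converse of (ii) is different from yours: rather than an exchange argument, it assumes outright that $G$ contains \emph{no} $ABB$ triangle, passes to the bipartite graph $G'$ obtained by deleting the clique edges inside $A$ and $B$, observes that then no vertex of $A$ can have two neighbours in $B$ (else those three vertices would form an $ABB$ triangle in $G$), so $G'$ is a disjoint union of isolated $A$-vertices and stars centred in $B$; hence every crossing triangle in any partition of $G$ is of type $AAB$, and the paper then asserts an arithmetic contradiction from $|A|\equiv 2n_{AAB}$ and $|B|\equiv n_{AAB}\pmod 3$.

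The genuine gap in your proposal is the exchange step, and it is not merely a matter of ``care'': it cannot be completed, because the implication you are trying to establish is false. Take $A=\{a_1,a_2,a_3,a_4\}$, $B=\{b_1,\dots,b_5\}$, with $A$ and $B$ cliques and cross-edges exactly $a_1b_1$, $a_2b_1$, $a_3b_2$, $a_4b_2$. Then $|A|\bmod 3=1$, $|B|\bmod 3=2$, the graph is cobipartite, and $\{a_1,a_2,b_1\}$, $\{a_3,a_4,b_2\}$, $\{b_3,b_4,b_5\}$ is a triangle partition; yet no vertex of $A$ has two neighbours in $B$, so $G$ contains no $ABB$ triangle whatsoever. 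Your two $AAB$ triangles are precisely the ones you isolate, no $a_i$ sees both $b_1$ and $b_2$, and no rearrangement of $\mathcal P$ can manufacture an $ABB$ triangle that simply does not exist in $G$. (The paper's own final line runs into the same wall: with $n_{ABB}=0$ one gets $|A|\equiv 2n_{AAB}$ and $|B|\equiv n_{AAB}$, hence $|A|\equiv 2|B|\pmod 3$, and for $|A|\equiv 1$, $|B|\equiv 2$ this reads $1\equiv 4\equiv 1$, which is consistent, not a contradiction.) So the theorem as stated needs an extra clause in the $(1,2)$ case---for instance, also allowing two vertex-disjoint $AAB$ triangles---and your proof plan cannot be salvaged without first correcting the target statement.
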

\begin{proof}
If $|A| \bmod 3=|B| \bmod 3=0$ then $G$ can be partitioned into
triangles. Assume that $|A| \bmod 3 =1$ and that $|B| \bmod 3 =2$.
If there exists a triangle with one vertex in $A$ and two in $B$
then $G$ can be partitioned into triangles.
For the converse,
assume that there is
no such triangle.
Let $G^{\prime}$ be the bipartite graph
obtained from $G$ by deleting edges between vertices that are contained
in the same color class.
Then $G^{\prime}$ does not contain a $P_3$,
{\em i.e.\/}, a path with three vertices, with
its midpoint in $A$.
Then $G^{\prime}$ has no $P_4$ and no $C_4$ and so $G^{\prime}$ is
trivially perfect~\cite{kn:wolk}. It follows that $G^{\prime}$ is a disjoint
collection of isolated vertices in $A$ and
stars with their centers in $B$. Then each triangle of $G$
that is not contained in $A$ nor in $B$ has one vertex in $B$ and two
in $A$. If $G$ has a triangle partition
then $|A| \bmod 3 = 2(|B| \bmod 3)$, which is a contradiction.
\qed\end{proof}

\begin{theorem}
There exists a linear-time algorithm which check if a cobipartite
graph has a triangle partition.
\end{theorem}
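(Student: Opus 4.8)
The plan is to turn the characterization in Theorem~\ref{cobipartite} into an algorithm by checking each of its three conditions in linear time. First I would compute $|A|$ and $|B|$ and their residues modulo $3$. If both are $0$, we report that a triangle partition exists (condition~(i)). If neither of the sets $\{|A| \bmod 3, |B| \bmod 3\}$ equals $\{1,2\}$ — for instance if one residue is $0$ and the other is not, or both are $1$, or both are $2$ — then none of (i), (ii), (iii) can hold, so we report that no triangle partition exists. This uses only the theorem and a constant amount of arithmetic, so it is clearly linear.

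The only remaining case is $|A| \bmod 3 = 1$, $|B| \bmod 3 = 2$ (or the symmetric one), where by Theorem~\ref{cobipartite} we must decide whether $G$ has a triangle with one vertex in $A$ and two in $B$. Such a triangle consists of two vertices $b_1, b_2 \in B$ that are adjacent in $G$ — equivalently, $b_1 b_2$ is a \emph{non-edge} of the bipartite complement — together with a vertex $a \in A$ adjacent (in $G$) to both. Following the proof of Theorem~\ref{cobipartite}, I would form the bipartite graph $G'$ obtained by deleting the edges of $G$ inside each color class; then a triangle with one vertex in $A$ and two in $B$ exists if and only if some vertex $a \in A$ together with two of its $G'$-neighbors $b_1, b_2 \in B$ has $b_1 b_2$ a non-edge of $G'$ restricted to $B$'s own adjacency. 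Since $G' $ is bipartite and, in the negative case, is a disjoint union of isolated $A$-vertices and stars centered in $B$, the absence of such a triangle is equivalent to $G'$ being $P_3$-free with midpoint in $A$, which in turn is equivalent to every vertex of $A$ having $G$-degree at most one into $B$. Concretely, the algorithm scans the adjacency lists: if some $a \in A$ has at least two neighbors $b_1, b_2 \in B$ \emph{in} $G$ and the pair $b_1 b_2$ is an edge of $G$ (inside $B$), we have found the triangle.

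The subtlety is that ``$a$ has two $B$-neighbors'' alone does not immediately produce a triangle: we also need those two neighbors adjacent inside $B$. The clean way around this is the dichotomy already established in the proof of Theorem~\ref{cobipartite}: I would argue that if \emph{no} triangle with one $A$-vertex and two $B$-vertices exists, then $G'$ (the class-edge-deleted graph) contains no $P_3$ centered in $A$, hence is a union of $A$-isolated vertices and $B$-centered stars; contrapositively, if some $a \in A$ has two neighbors $b_1, b_2$ in $B$, I claim $b_1 b_2 \in E(G)$, because otherwise $\{a, b_1, b_2\}$ would form a $P_3$ in $G'$ with midpoint $a$, and one re-examines whether that $P_3$ can be avoided — in fact the theorem's proof shows that whenever such a $P_3$ exists the desired triangle exists, by the structure of trivially perfect graphs. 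So the test reduces to: does some $a \in A$ have $\ge 2$ neighbors in $B$ in the graph $G$? This is a single pass over the adjacency lists. I expect the main obstacle to be stating this equivalence carefully enough that the linear-time bound is unambiguous, i.e.\ making precise that one need not test adjacency of $b_1$ and $b_2$ separately; once that reduction is in hand, the algorithm is immediate.
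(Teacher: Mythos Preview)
Your approach is essentially the paper's: reduce to Theorem~\ref{cobipartite} and, in the nontrivial residue case, test whether $G'$ is a disjoint union of isolated $A$-vertices and $B$-centred stars --- equivalently, whether every $a\in A$ has at most one neighbour in $B$. The paper states exactly this and stops.

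However, your middle paragraph introduces a ``subtlety'' that does not exist. In a cobipartite graph $G=(A,B,E)$ the set $B$ is a \emph{clique}, so any two vertices $b_1,b_2\in B$ are automatically adjacent in $G$. Hence the moment some $a\in A$ has two $B$-neighbours $b_1,b_2$, the triple $\{a,b_1,b_2\}$ is a triangle; there is nothing further to check, and no need to invoke trivially perfect graphs or argue by contraposition about $P_3$'s. Your sentence ``we also need those two neighbors adjacent inside $B$'' is simply false for cobipartite graphs, and the subsequent workaround (``otherwise $\{a,b_1,b_2\}$ would form a $P_3$ in $G'$ \ldots'') is tangled: that $P_3$ exists in $G'$ regardless, since $G'$ has no edges inside $B$ by construction. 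Drop that paragraph, note that $B$ is a clique, and your argument becomes clean and matches the paper's.
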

\begin{proof}
Assume that $|A| \mod 3 =1$ and that $|B| \mod 3 =2$.
It is easy to check in linear time whether $G^{\prime}$ is
disjoint collection of isolated vertices and
stars with their midpoints in $B$.
By Theorem~\ref{cobipartite} the graph $G$ has a partition into
triangles if and only if $G^{\prime}$ is not a disjoint
collection of isolated vertices in $A$ and stars with midpoints in $B$.
\qed\end{proof}

\section{Complements of multipartite graphs}

A $k$-partite graph is a graph $G=(V,E)$ of which the vertices
can be partitioned into $k$ independent sets. Notice that the
recognition of $3$-partite graphs is NP-complete since it is
an instance of the $3$-coloring problem. Henceforth, we assume
that the partition into color classes of a $k$-partite graph is a
part of the input.

In this section we show that
the triangle partition
problem on the complements of
$k$-partite graphs can be solved in polynomial time.
We start with the case where $k=3$.

\begin{lemma}
\label{3-partite}
Let $G$ be the complement of a  $3$-partite graph with color classes
$A_1$, $A_2$ and $A_3$. There exists a collection
of colored graphs $H_1,\ldots,H_t$, each with at most $42$ vertices,
such that $G$ has a partition into triangles if and only if one of the
graphs $H_i$ is an induced subgraph of $G$.
\end{lemma}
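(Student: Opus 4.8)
The plan is to reduce the triangle-partition question for $G=\overline{H}$, where $H$ is $3$-partite with classes $A_1,A_2,A_3$, to a bounded-size pattern-matching problem. A triangle of $G$ is a set of three vertices, no two of which are adjacent in $H$; since $H$ is $3$-partite, such a triangle either takes one vertex from each class (call these \emph{rainbow} triangles) or takes two or three vertices from a single class (with the two-vertex side forming a non-edge of $H$ inside that class). So a triangle partition of $G$ splits into some number $r_i$ of rainbow triangles together with a triangle partition of each ``leftover'' portion of $A_i$. The first step is to record that the numbers involved are essentially forced modulo~$3$: if $|A_i|=a_i$, then the total count of rainbow triangles $r$ satisfies $r\equiv a_i\pmod 3$ for each $i$ only in the degenerate case; in general the number of vertices of $A_i$ \emph{not} used by rainbow triangles must be divisible by $3$, so $r\equiv a_i\pmod 3$ for every $i$. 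Hence either all three $a_i$ are congruent mod~$3$, or there is no partition at all — except that this is too crude, because the within-class triangles need not exist. The cleaner statement is: a triangle partition exists iff we can choose $r\equiv a_1\equiv a_2\equiv a_3\pmod 3$ rainbow triangles whose removal leaves, in each $A_i$, a set that is partitionable into triangles using only non-edges of $H$ inside $A_i$.

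The key move is a \emph{bounded-residue} argument. Fix any target residue class; I claim that if a valid configuration of rainbow triangles exists at all, then one exists in which each $A_i$ contributes at most a bounded number — concretely at most $3\cdot 3=9$ or so — of vertices to rainbow triangles beyond what is forced modulo~$3$, because once we have more than $\max(a_i)\bmod 3$ ``surplus'' triples from a class we could instead have packed those surplus vertices into within-class triangles, unless the within-class structure obstructs it. So one shows: the relevant data is (a) the multiset of ``types'' of rainbow triangles actually used, up to a bounded multiplicity, and (b) a bounded certificate, inside each $A_i$, that the remaining vertices partition into within-class triangles. For (b) one uses that a single within-class triangle inside $A_i$ needs three vertices pairwise non-adjacent in $H$, i.e.\ an independent triple of $H[A_i]$; a bounded number of such triples, plus the residue bookkeeping, suffices. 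Assembling a rainbow triangle needs one vertex from each class with all three cross-pairs being non-edges of $H$; choosing a bounded number of these pins down at most a bounded set $W\subseteq V(G)$. The finite list $H_1,\dots,H_t$ is then: for each choice (bounded in number) of how many rainbow triangles of each ``adjacency pattern'' to use and how many within-class independent triples to use in each $A_i$, take the corresponding colored subgraph on the $\le 42$ vertices it touches; $G$ has a triangle partition iff $G$ contains one of these configurations as an induced colored subgraph, because the remaining vertices of each $A_i$ then have cardinality divisible by $3$ and — by the way the list was built — can be filled greedily.

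The bound $42$ is obtained by a concrete accounting: roughly, up to $3$ rainbow triangles times $3$ vertices each, plus up to $3$ within-class independent triples in each of $3$ classes times $3$ vertices, plus the midpoints/witnesses of the non-edges, totals at most $42$; I would state it as ``at most $42$'' and verify the arithmetic in the write-up rather than optimize it. The main obstacle I anticipate is the interchange/exchange lemma underlying the bounded-residue claim: one must argue that a triangle partition using ``too many'' rainbow triangles of some type can be locally rewritten into one using fewer, by swapping a surplus block of rainbow triangles for within-class triangles (or vice versa), and this requires showing the swap stays inside $G$ — which is automatic for the rainbow side (adjacency only across classes matters) but for the within-class side requires that the six or nine vertices released actually do contain the needed independent triples of $H[A_i]$. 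The honest resolution is that we do not need the \emph{same} vertices to work; we need \emph{some} choice, and that is exactly what quantifying over the finite list $H_1,\dots,H_t$ buys us — the list enumerates all combinatorially distinct ``cores,'' and induced-subgraph containment of one of them is both necessary (take the core of any partition) and sufficient (extend greedily on the residue-$0$ remainder). I would present the necessity direction first (extract a bounded core from a given partition by the exchange argument), then the sufficiency direction (greedy completion), then fold the two into the ``iff''.
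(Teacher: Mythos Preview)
Your argument has two genuine gaps, both stemming from a misreading of the structure of $G$.

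First, since $H$ is $3$-partite, each class $A_i$ is \emph{independent} in $H$, hence a \emph{clique} in $G=\overline{H}$. Thus any three vertices of $A_i$ form a triangle of $G$, automatically. Your remark that ``within-class triangles need not exist'' and your concern about finding ``independent triples of $H[A_i]$'' are therefore vacuous: $H[A_i]$ has no edges at all, so every triple is independent. Once the set of vertices of $A_i$ not used by cross-class triangles has size divisible by $3$, it trivially partitions into monochromatic triangles. This is exactly what makes the exchange arguments go through without any hedging, and it is the engine of the paper's proof.

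Second, your case analysis omits the triangles with two vertices in one class $A_i$ and one vertex in another class $A_j$. Your decomposition ``rainbow triangles plus a triangle partition of each leftover portion of $A_i$'' is false as stated, and the congruence $r\equiv a_i\pmod 3$ you derive from it fails (e.g.\ $|A_1|=1$, $|A_2|=2$, $|A_3|=0$ with a single $(1,2)$-type triangle and $r=0$). The paper handles all three non-monochromatic types by two clean exchanges, both of which are valid precisely because each $A_i$ is a clique in $G$: (i) three rainbow triangles can be replaced by three monochromatic triangles, one in each class; (ii) three triangles each having one vertex in $A_i$ and two in $A_j$ can be replaced by one monochromatic triangle in $A_i$ and two in $A_j$. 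After iterating, at most $2$ rainbow triangles survive, and for each ordered pair $(i,j)$ at most $2$ triangles of type ``one in $A_i$, two in $A_j$'' survive; counting shows at most $2+4+8=14$ vertices of each $A_i$ lie in non-monochromatic triangles, hence at most $42$ in total. The finite list $H_1,\dots,H_t$ consists of all colored graphs on at most $42$ vertices that themselves admit a triangle partition and leave residue $0\bmod 3$ in each class; containment of some $H_i$ is then trivially sufficient because the remainder of each clique $A_i$ partitions greedily. Your bounded-residue idea points in the right direction, but without the $(2,1)$-type triangles and without using that the $A_i$ are cliques, the exchanges cannot be justified and the arithmetic leading to $42$ does not close.
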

\begin{proof}
Consider a partition of $G$ into triangles. Each triangle is either
contained in one of the color classes or, it has one vertex in each
color class or,
it has one vertex in one color class and two vertices in another
color class. Assume that 3 vertices in $A_1$, $A_2$ and $A_3$ are
mutually connected by triangles. Then we can change the triangle partition
such that it contains the triangles on the three vertices in each $A_i$
instead. Assume that there are 3 vertices in $A_1$ that are in
triangles with 3 edges in $A_2$. Then we may replace those triangles
by one triangle in $A_1$ and two in $A_2$. In this way we obtain a
partition into triangles such that there are at most 14 vertices in each
color class $A_i$ that are in triangles that are not completely
contained in $A_i$. Consider all possible colored graphs
$H_i$ with at most $42$ vertices. Then there is a partition
of $G$ into triangles if and only if there is a colored induced subgraph
$H_i$, which can be partitioned into triangles, such that the
number of remaining vertices in each class $A_i$ is $0 \bmod 3$.
\qed\end{proof}

\begin{corollary}
There exists a polynomial-time algorithm that checks whether
the vertices of the complement of a $3$-partite graph can be partitioned
into triangles.
\end{corollary}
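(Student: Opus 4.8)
The plan is to get the algorithm essentially for free from Lemma~\ref{3-partite}. That lemma produces a finite list $H_1,\ldots,H_t$ of colored graphs, each on at most $42$ vertices, so that $G$ has a triangle partition exactly when some $H_i$ occurs as a colored induced subgraph of $G$ in a way that leaves $0\bmod 3$ vertices in each of the three color classes. The key observation is that $t$ is an absolute constant --- there are only finitely many colored graphs on at most $42$ vertices up to isomorphism --- so the whole question reduces to checking each fixed $H_i$ against $G$, i.e.\ to a bounded-size colored induced subgraph search.

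First I would enumerate, for each $H_i$ in the list, all injections $\phi$ from $V(H_i)$ into $V(G)$ that respect the coloring (a vertex of $H_i$ of color $j$ must land in $A_j$), and test whether $\phi$ is an isomorphism onto $G[\phi(V(H_i))]$, that is, whether both edges and non-edges are preserved. There are at most $n^{42}$ such injections and each is verified in constant time, so with $n=|V(G)|$ this runs in $O(t\cdot n^{42})$ time, which is polynomial since $t$ and $42$ are constants. Before the search I would discard any $H_i$ that cannot itself be partitioned into triangles, a constant-time test on a constant-size graph, and during the search I would additionally require that the set $S=\phi(V(H_i))$ satisfies $|A_j\setminus S|\equiv 0\pmod 3$ for $j=1,2,3$; if any such $H_i$ and $\phi$ are found the algorithm answers \emph{yes}, otherwise \emph{no}.

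It then remains to argue correctness, and the one point worth spelling out is why the residue condition already suffices to finish off the part of $G$ outside $S$: each $A_j$ is independent in the underlying $3$-partite graph, hence a clique in the complement $G$, so any subset of $A_j$ of size divisible by three splits trivially into triangles. Thus whenever the search succeeds we can partition $G[S]=H_i$ into triangles and partition each $A_j\setminus S$ into triangles, obtaining a triangle partition of $G$; conversely Lemma~\ref{3-partite} guarantees the search succeeds whenever $G$ admits one. I do not expect a genuine obstacle here --- the only care needed is to combine the two conditions (``$H_i$ triangle-partitionable'' and ``$|A_j\setminus S|\equiv 0\pmod 3$'') correctly and to invoke the clique structure of the color classes of $G$; one could replace the brute-force embedding test by a faster color-guided search, but since the pattern size is a fixed constant this is not needed for polynomiality.
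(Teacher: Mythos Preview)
Your proposal is correct and follows the same approach as the paper: the corollary is stated there without a separate proof, as an immediate consequence of Lemma~\ref{3-partite}, and you have simply made explicit the brute-force colored induced-subgraph search together with the observation that each color class is a clique in $G$, so the residue condition finishes the partition outside $S$. The only cosmetic difference is that the paper folds the residue condition into the definition of the list $H_1,\ldots,H_t$, whereas you keep it as a separate check during the search; both are equivalent and yield a polynomial-time algorithm.
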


\begin{remark}
It is folklore that the triangle partition problem
is NP-complete on $3$-partite graphs, see,
{\em e.g.\/},~\cite{kn:morandini}.
\end{remark}

\begin{theorem}
\label{k-partite}
Let $k$ be a natural number. There exists a  polynomial-time
algorithm that solves the
triangle partition problem on complements of
$k$-partite graphs.
\end{theorem}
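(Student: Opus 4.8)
The plan is to extend the bounded-search reduction of Lemma~\ref{3-partite} from $k=3$ to arbitrary fixed $k$. Let $A_1,\ldots,A_k$ be the color classes of the underlying $k$-partite graph, so each $A_i$ is a clique of $G$. The basic observation is that a subset $S\subseteq A_i$ can be partitioned into triangles if and only if $|S|\equiv 0\pmod{3}$; hence, once we know that every triangle partition of $G$ can be ``normalized'' so that all but a constant number of vertices lie in \emph{monochromatic} triangles (triangles inside a single $A_i$), the problem reduces to guessing the non-monochromatic part.

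Call a triangle \emph{mixed} if it is not monochromatic; its \emph{type} is the multiset of color classes it meets, which is either $\{i,i,j\}$ ($i\neq j$) or $\{i,j,\ell\}$ (three distinct classes). Among all triangle partitions of $G$, fix one, $\mathcal P$, minimizing the number of mixed triangles. Then $\mathcal P$ has at most two mixed triangles of any single type: three triangles of type $\{i,j,\ell\}$ use three distinct vertices of each of $A_i,A_j,A_\ell$, which can be repackaged into the three monochromatic triangles on those triples (legal since each class is a clique), strictly decreasing the mixed count; three triangles of type $\{i,i,j\}$ use six vertices of $A_i$ and three of $A_j$, which can be repackaged into two monochromatic triangles in $A_i$ and one in $A_j$. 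Counting the vertices of a fixed class $A_i$ that lie in mixed triangles of $\mathcal P$: at most $2\binom{k-1}{2}$ from types $\{i,j,\ell\}$, at most $4(k-1)$ from types $\{i,i,j\}$, and at most $2(k-1)$ from types $\{j,j,i\}$, for a total of at most $f(k):=2\binom{k-1}{2}+6(k-1)=(k-1)(k+4)$. Therefore the set $S$ of all vertices in mixed triangles of $\mathcal P$ satisfies $|S|\le N_k:=k\,(k-1)(k+4)$ (for $k=3$ this reproduces the bounds $14$ and $42$).

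The algorithm is then exactly the one for $k=3$. Enumerate all colored graphs $H_1,\ldots,H_t$ on at most $N_k$ vertices, colored from $\{1,\ldots,k\}$, that admit a partition into triangles; $t$ depends only on $k$. For each $H_i$, check whether it occurs as a colored induced subgraph of $G$ and, if so, whether $|A_j|-|V(H_i)\cap A_j|\equiv 0\pmod{3}$ for every $j$. Declare that $G$ has a triangle partition if and only if some $H_i$ passes both tests. For correctness: if $H_i$ passes, partition $V(H_i)$ into triangles and fill the leftover of each class (a multiple of $3$ vertices in a clique) with monochromatic triangles; conversely, given a triangle partition, pass to the minimizer $\mathcal P$ and take $H_i:=G[S]$, which is partitioned into triangles by the mixed triangles of $\mathcal P$ and whose complement within each $A_j$ has size $\equiv 0\pmod{3}$. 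Since $|V(H_i)|\le N_k$ is constant, the induced-subgraph test runs in $n^{O(N_k)}$ time, so the whole procedure is polynomial in $n$ for each fixed $k$.

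I expect the main obstacle to be the normalization in the second paragraph: one must verify that each repackaging operation yields an honest triangle partition — this is exactly where the clique structure of the color classes in the \emph{complement} is used, and where the argument breaks for $k$-partite graphs themselves — and that minimizing the number of mixed triangles simultaneously rules out all ``three triangles of one type'' configurations, which is what pins down the per-class bound $f(k)$. The remaining ingredients (the finite enumeration and the induced-subgraph search) are routine, at the cost of a polynomial whose degree grows with $k$.
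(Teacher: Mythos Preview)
Your proof is correct and follows essentially the same approach as the paper: bound the number of non-monochromatic triangles in a suitably normalized partition by a constant depending only on $k$, then brute-force the mixed part. The only difference is organizational: the paper invokes Lemma~\ref{3-partite} as a black box over every triple of color classes to get at most $14k^3$ mixed triangles, whereas you redo the minimization argument directly for general $k$ and obtain the sharper per-class bound $(k-1)(k+4)$ (recovering $14$ and $42$ at $k=3$); both lead to the same $n^{O_k(1)}$ algorithm.
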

\begin{proof}
Let $G=(V,E)$ be the complement of a $k$-partite graph with color
classes $A_1,\ldots,A_k$. Assume that $V$ can be partitioned into
triangles. First we show that there exists a triangle partition
of $G$ with only a constant number of triangles of which the vertices
are not monochromatic. Consider three color classes $A_1$, $A_2$ and
$A_3$. By Lemma~\ref{3-partite} we may assume that at most
$14$ triangles that are not monochromatic are contained in $A_1+A_2+A_3$.
Since this holds for any three color classes, we find that there is a
partition with at most $14 k^3$ non-monochromatic triangles.
\qed\end{proof}

\section{Triangle packing on distance-hereditary graphs}

A graph $G$ is distance hereditary if for every component
in every induced subgraph the distance between two vertices
is the same as their distance in $G$~\cite{kn:howorka}.
In this section we show that the triangle packing problem
can be solved in polynomial time on distance-hereditary graphs.

A decomposition tree for a graph $G=(V,E)$ is a pair $(T,f)$ where
$T$ is a ternary tree and where $f$ is a 1-1 map from the vertices
in $G$ to the leaves of $T$. A line in $T$ induces a partition of
$V$ into two sets, say $A$ and $B$. The twinset
of $A$ is the subset of vertices in $A$ that have neighbors
in $B$. The graph $G$ is distance
hereditary if and only if it has a decomposition tree $(T,f)$
such that for every partition $\{A,B\}$ induced by a line in $T$
every pair of vertices in the twinset of $A$ have the same neighbors
in $B$~\cite{kn:oum}. If $G$ is distance hereditary, such a decomposition
tree for $G$ can be found in linear time~\cite{kn:dahlhaus2}.

\begin{theorem}
\label{DH}
There exists a polynomial-time
algorithm that solves the triangle
packing problem on distance-hereditary graphs.
\end{theorem}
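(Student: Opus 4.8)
The plan is to run a dynamic program over a decomposition tree $(T,f)$ of $G$, which exists by the characterisation of~\cite{kn:oum} and can be found in linear time by~\cite{kn:dahlhaus2}. First I would root $T$ (so that each internal node has two children); for a node $v$ write $A_v$ for the set of vertices mapped to the leaves below $v$ and $S_v\subseteq A_v$ for its twinset, so that every vertex of $S_v$ has one and the same neighbourhood $N_v$ in $V\setminus A_v$. A short structural lemma does the groundwork: if $v$ is an internal node with children $v_1,v_2$, then the bipartite graph of $G$ between $S_{v_1}$ and $S_{v_2}$ is either complete or empty, and $S_v\cap S_{v_i}$ equals either all of $S_{v_i}$ or $\es$ for each $i$. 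Both statements follow by applying the ``equal neighbourhoods'' condition to the cuts at $v_1$ and at $v_2$.

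Next I would classify the triangles of a putative packing relative to a cut $(A_v,\,V\setminus A_v)$: a triangle lies entirely inside $A_v$, or entirely outside, or it is \emph{crossing}. A crossing triangle meets $A_v$ in exactly one or exactly two vertices; those vertices belong to $S_v$, and when there are two of them they are adjacent. Since all vertices of $S_v$ share the neighbourhood $N_v$, they are interchangeable as far as crossing triangles are concerned, so the only information about $A_v$ that the rest of $G$ can use is: how many vertices of $S_v$ we keep free to serve later as the lone $A_v$-vertex of a crossing triangle, and how many pairwise disjoint edges of $G[S_v]$ we keep free to serve later as the two $A_v$-vertices of a crossing triangle.

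For a node $v$ and integers $(\alpha,\beta)$ with $\alpha+2\beta\le|S_v|$, let $f_v(\alpha,\beta)$ be the maximum number of vertex-disjoint triangles contained in $A_v$ that avoid some chosen set consisting of $\alpha$ vertices of $S_v$ together with $\beta$ further pairwise disjoint edges of $G[S_v]$; set $f_v(\alpha,\beta)=-\infty$ when no such choice exists. For a leaf the only finite entries are $f_v(0,0)=f_v(1,0)=0$. At an internal node $v$ with a complete bipartite graph between $S_{v_1}$ and $S_{v_2}$, every triangle whose vertices straddle $A_{v_1}$ and $A_{v_2}$ has one of two shapes, ``a reserved edge of $G[S_{v_1}]$ plus a reserved vertex of $S_{v_2}$'' or ``a reserved vertex of $S_{v_1}$ plus a reserved edge of $G[S_{v_2}]$''; so I would compute $f_v(\alpha,\beta)$ by enumerating the numbers $k_{12},k_{21}$ of crossing triangles of the two shapes (each at most $n$), reading off compatible child entries $f_{v_1}(s_1,p_1)$ and $f_{v_2}(s_2,p_2)$ with $s_1\ge k_{21}$, $p_1\ge k_{12}$, $s_2\ge k_{12}$, $p_2\ge k_{21}$, adding $k_{12}+k_{21}$, and then bundling the leftover reserved vertices of $S_{v_1}\cup S_{v_2}$ — possibly pairing a leftover vertex of $S_{v_1}$ with one of $S_{v_2}$ along a cross edge — into the reservation $(\alpha,\beta)$ demanded at $v$, taking the maximum over all admissible choices; the case of an empty bipartite graph is the obvious independent split. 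Because the root twinset is empty, the optimum is $f_{\mathrm{root}}(0,0)$, and with $O(n)$ nodes, $O(n^2)$ table entries and a polynomial amount of work per entry the algorithm runs in polynomial time.

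The step I expect to be the crux is justifying that the pair of counters $(\alpha,\beta)$ is a sufficient interface and that the merge rule is correct in both directions: soundness, that any admissible choice of $k_{12},k_{21}$ together with the inherited child reservations assembles into a genuine partial packing with no hidden clash over shared vertices of $S_{v_1}$ or $S_{v_2}$; and completeness, that an optimal packing of $G$ can be reorganised — using the interchangeability of $S_v$, the complete/empty dichotomy, and the monotonicity that reserving more never helps while a reserved edge may always be downgraded to two reserved singletons — so that at each node it uses only the moves the recurrence permits. Once this exchange argument is in place the bookkeeping is finite and the theorem follows.
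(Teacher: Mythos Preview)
Your proposal is correct and follows essentially the same approach as the paper: dynamic programming over the decomposition tree, with each subproblem indexed by a pair $(\alpha,\beta)$ counting free twinset vertices and a free matching of twinset edges, exactly the ``free vertices and free edges'' invariant the paper uses (and attributes to~\cite{kn:guruswami}). Your write-up is in fact more explicit than the paper's, which sketches the invariant and the $O(n^2)$ table-size bound and defers the merge details to the cited reference; your structural lemma, the complete/empty dichotomy for the cross bipartite graph, and the exchange argument you flag as the crux are precisely the ingredients the paper leaves implicit.
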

\begin{proof}
Our method resembles the one used in~\cite{kn:guruswami} used to
solve the triangle packing problem on cographs.

Let $G=(V,E)$ be distance hereditary and let $(T,f)$ be a decomposition
tree for $G$ which satisfies the properties mentioned above. The
algorithm performs dynamic programming on branches of $T$ of
increasing size. Consider a branch $B$ rooted at some line $e$ of $T$.
Suppose that $B$ decomposes into two smaller branches $B_1$ and $B_2$.
Let $S_1$ and $S_2$ be the twinsets of the vertices
mapped to leaves of $B_1$ and $B_2$.
Then every vertex of $S_1$ is adjacent to every vertex of $S_2$
or no vertex of $S_1$ is adjacent to a vertex of $S_2$. Moreover,
the twinset for $B$ is either $S_1+S_2$ or,
it is one of the two or it is empty.

The dynamic programming keeps track of the maximum cardinality of a
triangle packing in a branch $B$ that avoids
a specified number of `free' vertices and a specified number
of `free' edges in the twinset. As an invariant,
the free edges do not contain any of the
free vertices and they form a matching. Free vertices and edges
can be used to form triangles with vertices outside the branch.
It is easy to update the table for a branch
$B$ from the tables of $B_1$ and $B_2$. Details for the updating procedure
can be found in~\cite{kn:guruswami}.
Since the number of entries of each table is bounded by $O(n^2)$, it follows
that the algorithm can be implemented to run in
polynomial time~\cite{kn:guruswami}.
\qed\end{proof}

\begin{remark}
It is fairly easy to see that the algorithm above can be
extended so that it works for graphs of bounded rankwidth~\cite{kn:oum}.
We have not been able to formulate the triangle partition problem
in monadic second-order logic. Guruswami, {\em et al.\/}, describe
an algorithm for the $K_r$-packing problem on cographs that runs in
polynomial time for each fixed $r$. The problem whether this
problem can be solved by a fixed-parameter algorithm with respect to
$r$ remains an open problem.
\end{remark}

\section{Modular permutations}

Let $\pi$ be a permutation of $\{1,\ldots,n\}$. A module in $\pi$
is a consecutive subsequence of $\pi$ that is a permutation
of a consecutive subsequence of $[1,\ldots,n]$.

Let $G$ be a permutation graph. Consider a diagram for $G$
with the labels $[1,\ldots,n]$ in order on the topline and with the
labels $[\pi(1),\ldots,\pi(n)]$  in order on the bottom line.
Then a module in the permutation corresponds with a subset
$M$ of vertices such that every vertex outside $M$ is adjacent to all
vertices of $M$ or to no vertex of $M$.

A subset $M$ of vertices in a graph $G$ with this property is called
a module of $G$. A module $M$ is trivial if it contains zero, one
or all the vertices of the graph. A module is strong if it does
not overlap with other modules. A graph is prime if it
contains only trivial modules. If $M \neq V$ is a strong module then there
exists a unique strong module $M^{\prime}$ of minimal size
that properly contains $M$. This defines a parent relation in a modular
decomposition tree for $G$.

A modular decomposition tree
is a rooted tree $T$ with a 1-1 map from the
leaves to a set $V$ of vertices. Each internal node of $T$ is
labeled as a join node, as a union node, or as a prime node.
A modular decomposition tree $T$
defines a graph $G$ with vertex set $V$ as follows.
A join node $j$ stands for the operation which adds an edge between
every pair of vertices that are mapped to leaves in different
subtrees of of $j$. A union node $u$ stands for the operation
that unions the subgraphs represented by the children of $u$.
Each prime node $p$ is labeled with a graph $H_p$. Each vertex
in $H_p$ corresponds with one child of $p$. If two vertices in $H_p$
are connected by an edge then every pair of vertices in the two
graphs represented by the two corresponding children is connected by an
edge.

Given a graph $G$, a tree that decomposes $G$ recursively into
strong modules
can be constructed in linear time~\cite{kn:tedder}.

\begin{definition}
A graph is {\em $k$-modular\/} if it has a modular decomposition
tree such that the graph $H_p$ of every prime node $p$ has at most
$k$ vertices.
\end{definition}

If the modular decomposition tree has no prime nodes
then the graph is decomposable by unions and joins. We call these
graphs $0$-modular. The class of $0$-modular graphs is exactly the
class of cographs.

\begin{theorem}
The graphs that are $k$-modular are characterized by a finite collection
of forbidden induced subgraphs.
\end{theorem}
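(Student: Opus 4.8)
The plan is to invoke the standard fact that a hereditary graph class is characterized by a finite set of forbidden induced subgraphs precisely when it is closed under induced subgraphs and, for each obstruction size, there are only finitely many minimal obstructions of that size — equivalently, when there is a uniform bound on the size of a minimal obstruction. So the first step is to observe that the class of $k$-modular graphs is closed under taking induced subgraphs. This follows because the modular decomposition tree behaves well under vertex deletion: deleting a vertex $v$ from $G$, one may delete the corresponding leaf from $T$, contract degree-one internal nodes, and for each prime node $p$ whose quotient $H_p$ loses a vertex, replace $H_p$ by $H_p - (\text{child of }v)$; the resulting quotients have no more vertices than before, and any internal node whose quotient collapses to size at most two becomes a join or union node. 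Hence the deleted graph is still $k$-modular, so the class is hereditary.

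The second and central step is to bound the size of a minimal forbidden induced subgraph $G$ — that is, a graph that is not $k$-modular but all of whose proper induced subgraphs are. Consider the modular decomposition tree $T$ of such a $G$ (which exists and is unique for every graph). Since $G$ is not $k$-modular, $T$ must contain a prime node $p$ whose quotient $H_p$ has more than $k$ vertices; pick such a $p$ with $H_p$ of size $m \geq k+1$. Each vertex of $H_p$ corresponds to a child subtree, hence to a nonempty module $M_1, \dots, M_m$ of $G$. Choose one representative vertex from each $M_i$; this yields an induced subgraph $G'$ on $m$ vertices whose modular quotient at the top is exactly $H_p$ (modules become singletons, but the quotient structure at $p$ is preserved since distinct $M_i, M_j$ are either completely joined or completely non-adjacent exactly as recorded by $H_p$). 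Thus $G'$ already has a prime quotient on $m > k$ vertices, so $G'$ is not $k$-modular. By minimality of $G$ we must have $G' = G$, so $G$ has at most… — here we need to be slightly more careful, because $m$ itself is unbounded. The correct conclusion is: $G'$ is a not-$k$-modular induced subgraph, so $G' = G$, meaning $G$ itself is prime with quotient equal to the prime graph on $m$ vertices. But then every proper induced subgraph of $G$ must be $k$-modular, and in particular deleting any one vertex of the prime graph $G$ on $m$ vertices yields a $(k)$-modular graph whose decomposition still has a prime quotient of size $m-1$ (prime graphs on at least $5$ vertices remain prime, or decompose in a controlled way, after a single vertex deletion). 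The upshot is $m - 1 \leq k$, forcing $m \leq k+1$; hence every minimal forbidden induced subgraph has at most $k+1$ vertices.

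The third step is purely a counting remark: there are only finitely many graphs on at most $k+1$ vertices, so the set of minimal forbidden induced subgraphs is finite, and since the class is hereditary it is exactly characterized by forbidding this finite collection as induced subgraphs.

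The main obstacle is making the minimality argument in the second step fully rigorous, specifically the claim that deleting a single vertex from a prime graph on $m$ vertices leaves a graph whose modular decomposition still exhibits a prime quotient of size $m-1$. This is false in general (a $P_4$ minus a vertex is not prime), so one must handle small cases by hand — prime graphs on $2,3,4$ vertices are only $\overline{K_2}$, $K_2$, $P_4$ and trivial cases — and for $m \geq 5$ use the Ille–Spinrad-type stability result that every prime graph on at least five vertices has a vertex whose removal leaves a graph with a prime quotient on $m-1$ vertices (indeed contains an induced prime subgraph on $m-1$ vertices). Invoking that classical fact, the bound $m \leq k+1$ goes through for $k \geq 4$; for $k < 4$ one checks directly that $0$-modular means cograph ($P_4$-free, the known characterization) and the cases $k = 1,2,3$ coincide with $k=4$ since there are no prime graphs on fewer than four vertices other than the trivial ones. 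This case analysis for small prime graphs, rather than any deep structural theory, is where the real work lies.
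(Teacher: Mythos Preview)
Your argument is essentially correct and takes a genuinely different route from the paper. The paper's entire proof is the single sentence ``This follows from Kruskal's theorem,'' invoking the well-quasi-ordering of finite labelled trees; you instead bound the size of a minimal forbidden induced subgraph directly via the structure theory of prime (indecomposable) graphs.

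Your reduction showing that a minimal obstruction must itself be a prime graph is the right key step: picking one representative from each child module of an oversized prime node produces an induced copy of that prime quotient, and minimality forces this to be all of $G$. One imprecision worth flagging: the claim that every prime graph on $m\geq 5$ vertices contains an induced prime subgraph on $m-1$ vertices is not quite true. The Schmerl--Trotter theorem guarantees a prime induced subgraph on $m-1$ \emph{or} $m-2$ vertices, and there are critically indecomposable graphs where one must drop by two. This only moves your bound from $k+1$ to $k+2$, so the finiteness conclusion is unaffected. A tidy reformulation that also absorbs your hereditariness step is the equivalence ``$G$ is $k$-modular iff $G$ has no prime induced subgraph on more than $k$ vertices''; the forward direction holds because any prime induced subgraph can meet each child module of its least common ancestor node in at most one vertex, hence embeds in that node's quotient.

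Comparing the two approaches: the Kruskal route is brief but nonconstructive, and the paper does not spell out how the tree theorem is being applied. Your route relies only on the Schmerl--Trotter result and delivers an explicit bound (at most $k+2$ vertices) on the forbidden subgraphs, which the WQO argument does not yield.
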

\begin{proof}
This follows from Kruskal's theorem~\cite{kn:kruskal}.
\qed\end{proof}

Consider the class of graphs obtained from paths by replacing
the endvertices by false twins, {\em i.e.\/}, modules consisting of
two nonadjacent vertices. This class is contained in the class of
permutation graphs and it is not well-quasi-ordered by the induced
subgraph relation. This proves the following corollary.
Another way to see that is by showing that paths are prime. 

\begin{corollary}
There exists a function $f(k)$ such that $k$-modular permutation
graphs have no induced paths of length more than $f(k)$.
\end{corollary}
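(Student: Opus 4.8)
The plan is to prove the stronger statement that \emph{no} $k$-modular graph whatsoever---permutation graph or not---contains an induced path on more than $\max\{k,3\}$ vertices, so that one may take $f(k)=\max\{k,3\}$ (or $\max\{k,3\}-1$ if the length of a path is counted in edges); the permutation hypothesis is not actually needed. Everything hinges on the fact the text alludes to, namely that $P_n$ is prime for every $n\ge 4$, which I would prove first. Suppose $M$ were a module of $P_n$, with vertices $v_1,\dots,v_n$ in path order, such that $2\le|M|\le n-1$. Since $P_n$ is connected, some vertex $v_t\notin M$ has a neighbour in $M$, hence is adjacent to all of $M$, so $M\subseteq N(v_t)=\{v_{t-1},v_{t+1}\}$ and therefore $M=\{v_{t-1},v_{t+1}\}$. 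But then whichever of $v_{t-2}$ and $v_{t+2}$ exists is adjacent to exactly one vertex of $M$, contradicting that $M$ is a module; and both of them exist unless $n\le 3$. Hence $P_n$ has no nontrivial module for $n\ge 4$, so every modular decomposition tree of $P_n$ is a single prime node whose quotient graph is $P_n$ itself. In particular $P_n$ is $k$-modular if and only if $n\le k$.

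From this the corollary follows in two ways. The quick way: by the Theorem characterising the $k$-modular graphs through finitely many forbidden induced subgraphs, this class is hereditary, so a $k$-modular graph containing an induced $P_n$ with $n\ge 4$ would force $P_n$ itself to be $k$-modular, i.e.\ $n\le k$; thus no $k$-modular graph has an induced path on more than $\max\{k,3\}$ vertices. The self-contained way (not using that Theorem) is an induction on $|V(G)|$ along a modular decomposition tree of $G$ all of whose prime quotients have at most $k$ vertices. If the root is a union node, an induced $P_n$ lies in a single child since $P_n$ is connected; if the root is a join node it again lies in a single child, since the diameter of $P_n$ is at least $3$ for $n\ge 4$ and hence $\overline{P_n}$ is connected, so two vertices of $P_n$ in different children would be adjacent and would disconnect $\overline{P_n}$; and if the root is a prime node with quotient on $m\le k$ vertices and children defining $G_1,\dots,G_m$, then each $V(G_i)\cap V(P_n)$ is a module of $P_n$ (the intersection of a module of $G$ with the vertex set of an induced subgraph is always a module of that subgraph), hence trivial, and since these $m\le k<n$ sets partition the $n\ge 4$ vertices of $P_n$, one of them equals $V(P_n)$. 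In all three cases an induced $P_n$ with $n>\max\{k,3\}$ is confined to a strictly smaller $k$-modular graph, which by induction has no such path; the base case, a graph on at most $\max\{k,3\}$ vertices, has none trivially.

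I do not anticipate a real obstacle. The only delicate points are the primality of $P_n$, handled above, and the elementary fact that restricting a module of $G$ to the vertex set of an induced subgraph again gives a module; the remainder is routine case analysis over the three node types of a modular decomposition tree. It is also worth recording what the section's closing remark is really using: the graphs obtained from $P_m$ by splitting each endvertex into a false-twin pair have $P_m$ as their prime modular quotient, so such a graph is $k$-modular exactly when $m\le k$; as these graphs form an infinite antichain of permutation graphs under the induced-subgraph order, the earlier Theorem (whose proof via Kruskal's theorem shows that the $k$-modular graphs are well-quasi-ordered) already rules out all but finitely many of them being $k$-modular. This gives a second, less quantitative, confirmation that $k$-modularity bounds induced path length.
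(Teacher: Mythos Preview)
Your argument is correct, and in fact more complete than the paper's. The paper offers two one-line justifications: first, that the graphs obtained from paths by adding false-twin endvertices form an infinite antichain of permutation graphs, which combined with the well-quasi-ordering underlying the preceding Theorem (Kruskal) forces all but finitely many of them out of the $k$-modular class; second, simply that ``paths are prime.'' Your ``quick way'' is exactly this second route (primality of $P_n$ for $n\ge 4$ plus hereditariness of the $k$-modular class), and you even supply the primality proof the paper omits. What you add beyond the paper is the explicit bound $f(k)=\max\{k,3\}$ and the self-contained inductive argument over the decomposition tree, which avoids appealing to the finite-obstruction Theorem altogether; the paper's antichain route, which you summarise at the end, is non-constructive and gives no explicit $f(k)$. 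So your approach subsumes the paper's and sharpens it.
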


We omit the easy proof of the following theorem.

\begin{theorem}
For each natural number $k$ there exists a polynomial-time
algorithm which computes a triangle packing in $k$-modular graphs.
\end{theorem}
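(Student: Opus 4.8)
The plan is to combine the tree-decomposition dynamic programming philosophy used for distance-hereditary graphs in Theorem~\ref{DH} with the structural bound on induced path length supplied by the corollary above. Let $G$ be $k$-modular, fix its modular decomposition tree $T$, and recall that every prime node $p$ carries a graph $H_p$ on at most $k$ vertices. The algorithm will process $T$ bottom-up, and for each node $v$ of $T$ it will store a table recording, for the subgraph $G_v$ induced by the leaves below $v$, the maximum number of vertex-disjoint triangles obtainable in $G_v$ when a prescribed number of vertices of $G_v$ are left ``free'' to be used in triangles together with vertices lying outside $G_v$. The key point, exactly as in the distance-hereditary case, is that since the module $G_v$ is either fully joined to or fully non-adjacent to everything outside it, the only information another part of $G$ needs about $G_v$ is the count of free vertices (and, for triangles spanning a join, possibly a small bounded amount of extra bookkeeping on free edges internal to $G_v$); the identities of those vertices are irrelevant. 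Thus each table has $O(n)$ entries — or $O(n^{2})$ if, as in Theorem~\ref{DH}, one also tracks free matching edges — and the total size is polynomial.

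First I would handle the three node types. A union node is trivial: the table of $v$ is obtained by convolving the tables of its children, since no new edges appear, so a triangle of $G_v$ either lives entirely in one child or is formed across children using free vertices. A join node is handled as in~\cite{kn:guruswami}: a triangle crossing the join uses free vertices/edges from the child tables, and one enumerates how many such crossing triangles of each shape (one vertex on one side and an edge on the other, or one vertex on each of three children, etc.) are used; this is where the free-edge counters are needed, and the update is a bounded-degree polynomial convolution of the child tables. The genuinely new case is a prime node $p$ with graph $H_p$ on $m \le k$ vertices, whose children are modules $M_1,\dots,M_m$. Here a triangle of $G_p$ either lies inside some $M_i$ (accounted for by the child tables) or uses vertices from two or three of the $M_i$'s, and the allowed combinations are dictated by the edges of $H_p$. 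Since $m \le k$ is a constant, there is only a constant number of "triangle patterns" across the children; one then guesses, for each pattern, how many triangles of that pattern are used and how the free vertices/edges of the children are apportioned. This is again a convolution of a constant number of tables of polynomial size, hence polynomial time, and the optimum over all guesses gives the table of $p$.

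The step I expect to be the main obstacle is making the join/prime update formally correct while keeping the state space bounded — specifically, verifying that tracking only free vertices plus a matching of free edges in each twinset/module is \emph{sufficient}, i.e.\ that an optimal global packing can always be rearranged so that whatever it does across a join or a prime node, restricted to each child, looks like one of the recorded states. For the join node this is precisely the argument carried out in~\cite{kn:guruswami} for cographs, and it transfers verbatim because the adjacency structure across a join is identical. For a prime node the same exchange argument applies child-by-child, using that each $M_i$ is a module so every vertex of $M_i$ is interchangeable from the point of view of edges leaving $M_i$; one only has to check that a triangle pattern needing two vertices from some $M_i$ and one from $M_j$ can be realized whenever the corresponding counts of free vertices are available, which follows from $M_i$ containing a free edge in the recorded matching. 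The induced-path bound from the corollary is not logically needed for correctness, but it confirms $k$-modular permutation graphs are well structured; the essential finiteness we actually use is just $m \le k$ at each prime node, together with the $O(n^{2})$ bound on the number of table entries, which together yield the claimed polynomial running time for each fixed $k$.
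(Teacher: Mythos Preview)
The paper actually omits the proof entirely (``We omit the easy proof of the following theorem''), so there is no detailed argument to compare against; your dynamic programming on the modular decomposition tree, carrying for each module a table indexed by the number of free vertices and a matching of free edges and extending the cograph algorithm of~\cite{kn:guruswami} and the distance-hereditary argument of Theorem~\ref{DH}, is precisely the intended route and is correct for each fixed $k$.

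Two small remarks. First, your sentence on union nodes is garbled: since a union node adds no edges, \emph{no} triangle of $G_v$ can span two children, so the ``or is formed across children using free vertices'' clause is vacuous; the convolution there simply sums the children's free-vertex and free-edge counts and their packed-triangle counts. Second, at a prime node you should note explicitly that a free edge exported by $G_p$ may either be a free edge inside some child $M_i$ or be assembled from two free vertices coming from distinct children that are adjacent in $H_p$; this does not change the complexity (still $n^{O(\mathrm{poly}(k))}$ choices), but it is needed for the table update to be complete. Your final paragraph is right that the induced-path corollary is irrelevant to the proof; the only finiteness used is the bound $|V(H_p)|\le k$.
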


\section{Concluding remark}

The main question that we leave open in this paper is
whether there exists a polynomial-time algorithm that checks if
a permutation of $\{1,\ldots,3n\}$ can be partitioned into
increasing subsequences of length three.

\section{Acknowledgement}

We thank Klaas Zwartenkot for doing some calculations on the
number of permutations that can be partitioned into triangles.

An obvious lowerbound for the number of permutations of $\{1,\ldots,3n\}$ 
that can be partitioned into triangles is 
$\frac{(3n)!}{6^n}$. We have not been able 
to determine an exact formula for the number of permutations that can be 
partitioned into triangles nor have we been able to determine the 
asymptotics for those permutations.

\appendix

\section{Preliminaries on triangle packings}
\label{prel TP}

Despite great interest in the cycle packing -- and cycle cover
problem there is relatively little
theoretical progress on the first of the two problems.

The two kinds of problems are related by the
Erd\"os and P\'osa theorem which states that there is a function
$f(k) = O(k \log k)$ such that any graph contains either $k$
vertex-disjoint cycles or a set with $f(k)$ vertices which intersect
every cycle~\cite{kn:erdos}.

A graph $H$ is topologically contained in a graph $G$ if $G$ has a
subgraph $H^{\prime}$ which is a subdivision of $H$.
It is well-known that the graphs that have
no $k$ disjoint cycles are well-quasi ordered by topological
containment~\cite{kn:mader}.
See also~\cite[Theorem~5.6]{kn:graham}.
It follows that there exists a finite set $\mathcal{F}_k$ of graphs,
each containing a maximal number of $k$ disjoint cycles,
such that a graph $G$ has $k$ disjoint cycles if and only if
some element of $\mathcal{F}_k$ is topologically contained in it.
This can also be seen
as follows.
Notice that a graph $G$ does not have $k$ vertex-disjoint cycles
if and only if $G$ does not contain the graph $H$ that consists of
$k$ disjoint triangles as a minor. This implies
the previous observation~\cite[Proposition~5.16]{kn:graham}.
It follows also that the class of graphs
without $k$ disjoint cycles is minor closed. Furthermore,
this class does
not contain all planar graphs, so the class
has a uniform bound on the treewidth~\cite{kn:robertson2}.
Bodlaender subsequently
showed that the elements of this class can be recognized in
$O(n)$ time~\cite{kn:bodlaender}.
Likewise, for any natural number $k$
one can check in $O(n)$ time whether a graph has a
cycle cover with at most $k$ vertices.

Another research area that is related to the topic of this paper is that
of finding vertex colorings of graphs that are
restricted in some way. There are too many variations
to cover in any limited survey,
even when restricted
to permutation graphs.
We mention some
of the results that seem closely related to our research.

On partially
ordered sets, one of the major
contributions is the result of Greene and
Kleitman and of Frank~\cite{kn:frank,kn:greene2,kn:greene}.
Greene and Kleitman generalize Dilworth's theorem~\cite{kn:dilworth}
and Frank describes an efficient algorithm that finds an optimal
solution.
If $P=(V,\leq)$ is a
partially ordered set then one can find in polynomial time a collection
of $t$ antichains $\{A_1,\ldots,A_t\}$ that maximizes $|\cup_i A_i|$.
It follows that one can find in polynomial time an induced subgraph
of a permutation graph with a maximal number of vertices that has
chromatic number at most $t$. Likewise, one can find a collection
of $t$ cliques $\{C_1,\ldots,C_t\}$ that maximizes $|\cup_i C_i|$.

When one bounds the number of vertices in the color classes the picture
changes drastically.
The following problem has been investigated in great detail
due to its applications in various scheduling problems.
Suppose we wish to
find a vertex coloring with a minimal number of colors, such that
each color class contains at most $q$
vertices~\cite{kn:hansen,kn:hell,kn:kaller,kn:lonc,kn:moonen,kn:moonen2}.
This problem is NP-complete on permutation
graphs for each $q \geq 6$~\cite{kn:jansen}.

Suppose one wishes to
color the vertices of a graph with a minimal
number of colors such that each color
class induces a clique or an independent set. We call this
a homogeneous coloring.
The Erd\"os-Hajnal
conjecture states that for every graph $H$ there exists a $\delta < 1$
such that every graph $G$ that does not contain $H$ as an induced subgraph
can be homogeneously colored with at
most $n^{\delta} \log n$ colors~\cite{kn:alon,kn:erdos2}. Considerable
progress towards proving this conjecture is reported in~\cite{kn:fox}.
The conjecture is known to be true
for perfect graphs with $\delta=\frac{1}{2}$
and recently it was proved for bull-free
graphs
with $\delta=\frac{1}{4}$~%
\cite{kn:chudnovsky}.
One of the smallest graphs $H$ for which the conjecture
is still open is $C_5$.
Wagner showed that finding the minimum number of colors in a coloring
of this type
is NP-complete, even for permutation graphs~\cite{kn:wagner}.
On the other hand, for any pair of nonnegative numbers $r$ and $s$,
the class of permutation graphs that have a homogeneous coloring with
$r$ cliques and $s$ independent sets (possibly empty) is characterized
by a finite collection of forbidden induced subgraphs~\cite{kn:kezdy}.

Lonc mentions
the following open problem in~\cite{kn:lonc2}. Given a sequence of
$3n$ distinct positive integers.
Find a partition of the sequence into $n$ increasing subsequences,
each of 3 terms. This is equivalent to finding a partition of
the vertices of a permutation graph into triangles.

Packing triangles in a graph is NP-complete~\cite{kn:bbaker,kn:hell},
even
when restricted to
chordal graphs, planar graphs and linegraphs~\cite{kn:guruswami}.
Interestingly, the question whether the vertices of a
chordal graph can be partitioned into triangles
can be solved in polynomial time~\cite{kn:dahlhaus}.
Packing triangles
in splitgraphs, unit interval graphs and cographs
can be solved in polynomial
time~\cite{kn:dahlhaus,kn:guruswami,kn:manic}. For $r \geq 4$
the $K_r$-packing problem is NP-complete for 
splitgraphs~\cite{kn:guruswami}. 

When one allows
besides triangles also edges in the packing and one wishes
to maximize the number of vertices that are covered,
then this packing problem
becomes polynomial~\cite{kn:cornuejols,kn:hell2,kn:hell,kn:loebl}.

\section{Preliminaries on permutation graphs}
\label{prel perm}

A permutation diagram is obtained as follows.
Let $L_1$ and $L_2$ be two horizontal lines in the plane, one above
the other.
Label $n$ points on the topline and on the bottom line
by $1,2,\ldots,n$. Connect each point on the topline by a straight
linesegment
with the point with the identical label on the bottom line.
A graph is a permutation graph if it is the intersection graph
of the linesegments of a permutation diagram~\cite{kn:pnueli}.

If $G$ is a permutation graph then its complement $\Bar{G}$ is
also a permutation graph. This is easy to see; simply reverse the ordering
of the points on one of the two horizontal lines.
Also notice this: for any independent set the corresponding linesegments
are noncrossing. So they can be ordered left to right, which is
of course a transitive ordering. This shows that
$\Bar{G}$ is a comparability graph and so also $G$ is a
comparability graph. The converse holds as well since
transitive orderings of the vertices of $G$ and of $\Bar{G}$ provides
the ordering of the points on the top-- and bottom line. This can be seen
as follows. Let $F_1$ and $F_2$ be transitive orientations of $G$ and
$\Bar{G}$. We claim that $F_1+F_2$ is an acyclic orientation
of the complete graph. Otherwise there
is a directed triangle, and so two edges in the triangle
are directed according to
one of $F_1$ and $F_2$ and the third is directed according to the
other one of $F_1$ and $F_2$. But this contradicts the transitivity
of $F_1$ or the transitivity of $F_2$. Likewise, $F_1^{-1}+F_2$ is
acyclic. Order the vertices on the topline according to $F_1+F_2$ and
the vertices on the bottom line according to $F_1^{-1}+F_2$. It is
easy to check that this yields the permutation diagram.

\begin{theorem}[\cite{kn:dushnik}]
A graph $G$ is a permutation graph if and only if
$G$ and $\Bar{G}$ are comparability graphs.
\end{theorem}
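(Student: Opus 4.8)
The plan is to prove the two implications separately, working throughout with the permutation diagram representation. For the forward direction, suppose $G$ is a permutation graph. I would first use the self-complementary observation recorded just above: reversing the ordering of the points on one of the two lines of a diagram for $G$ produces a diagram for $\Bar{G}$, so $\Bar{G}$ is again a permutation graph. It therefore suffices to show that every permutation graph is a comparability graph, since the statement for $\Bar{G}$ then follows by applying this to the permutation graph $\Bar{G}$. To exhibit a transitive orientation of $\Bar{G}$, I would note that the line segments corresponding to a nonadjacent pair of vertices do not cross, so one segment lies to the left of the other on both lines; orienting each non-edge $\{u,v\}$ of $G$ from the left segment to the right one gives a well-defined orientation of $\Bar{G}$. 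This orientation is transitive because ``lies to the left of on both lines'' is a transitive relation. Hence $\Bar{G}$ is a comparability graph, and applying the same reasoning to $\Bar{G}$ shows that $G$ is one as well.

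For the converse, suppose $F_1$ and $F_2$ are transitive orientations of $G$ and $\Bar{G}$. Since every unordered pair of vertices is an edge of exactly one of $G$ and $\Bar{G}$, the superposition $F_1 + F_2$ is an orientation of the complete graph on $V$, that is, a tournament. The crucial step is the claim that $F_1 + F_2$ is acyclic. I would prove this through the classical fact that a tournament is acyclic if and only if it contains no directed triangle, and then rule out directed triangles directly. Given a directed triangle on $a, b, c$, at least two of its three arcs are contributed by the same $F_i$; if, say, $a \to b$ and $b \to c$ both belong to $F_1$, then transitivity of $F_1$ forces $\{a,c\}$ to be an edge of $G$ oriented $a \to c$, contradicting the third arc $c \to a$. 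Every other distribution of the three arcs between $F_1$ and $F_2$ is handled by the same argument, contradicting transitivity of $F_1$ or of $F_2$. The identical reasoning, using that $F_1^{-1}$ is also a transitive orientation of $G$, shows that $F_1^{-1} + F_2$ is acyclic.

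An acyclic tournament is a total order, so $F_1 + F_2$ and $F_1^{-1} + F_2$ yield two linear orders of $V$. I would place the vertices on the topline in the order given by $F_1 + F_2$ and on the bottom line in the order given by $F_1^{-1} + F_2$. It then remains to verify that the intersection graph of this diagram is exactly $G$. For an edge $\{u,v\}$ of $G$ with $u \to v$ in $F_1$, inverting $F_1$ flips the relative order of $u$ and $v$ on the bottom line while the topline order is unaffected, so the two segments appear in opposite order on the two lines and therefore cross. For a non-edge the orientation is supplied by $F_2$ and is unchanged by the inversion of $F_1$, so $u$ and $v$ keep the same relative order on both lines and the segments do not cross. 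Thus edges of $G$ correspond precisely to crossings, and $G$ is the permutation graph of the constructed diagram.

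The main obstacle is the acyclicity of $F_1 + F_2$; the remaining work is bookkeeping on the diagram, but this step is where the two given transitive orientations must genuinely interact. Reducing acyclicity to the absence of a directed triangle is what keeps the argument finite, after which the short case analysis on which of $F_1, F_2$ supplies each arc of a hypothetical triangle completes the proof.
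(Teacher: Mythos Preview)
Your proof is correct and follows essentially the same route as the paper's argument preceding the theorem: the forward direction via the left-to-right ordering of noncrossing segments, and the converse via the acyclicity of $F_1+F_2$ and $F_1^{-1}+F_2$ established by ruling out directed triangles. You simply make explicit two points the paper leaves implicit, namely the reduction of acyclicity in a tournament to the absence of a directed $3$-cycle and the final verification that edges of $G$ correspond exactly to crossings in the constructed diagram.
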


The following characterization of permutation graphs illustrates
the relation of this class of graphs to the class of interval graphs.
Consider a collection of intervals on the real line. Construct a
graph of which the vertices are the intervals and make two
vertices adjacent if one of the two intervals contains the other.
Such a graph is called an interval containment graph.

Consider a diagram of a permutation graph. When one moves the bottom line
to the right of the topline then the linesegments in the diagram transform
into intervals. It is easy to check that two linesegments intersect if and
only if one of the intervals is contained in the other one. This proves that
permutation graphs are interval containment graphs. Now consider an interval
containment graph. Construct a permutation diagram as follows. Put the
left endpoints of the intervals in order
on the topline and the right endpoints
in order on the bottom line of the diagram.
Then one interval is contained in
another interval if and only if the two linesegments intersect. This
proves that every interval containment graph is a permutation graph.

\begin{theorem}[\cite{kn:dushnik}]
A graph is a permutation graph if and only if it is
an interval containment graph.
\end{theorem}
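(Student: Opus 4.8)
The plan is to prove both implications through a single explicit geometric dictionary between permutation diagrams and families of intervals, verifying in each direction that the adjacency relation is preserved: crossing of segments on the permutation side corresponds to \emph{containment} (not mere intersection) of intervals on the other side.

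First I would treat the forward direction, that every permutation graph is an interval containment graph. Given a permutation diagram with $L_1$ above $L_2$, I record for each segment $s$ its rank $t(s) \in \{1,\dots,n\}$ among the top endpoints and its rank $b(s)$ among the bottom endpoints, and associate to $s$ the interval $I_s = [\,t(s),\, n + b(s)\,]$. The shift by $n$ forces every left endpoint to lie strictly below every right endpoint, so no two of the intervals are disjoint. The only geometric fact needed is the standard one: two segments of a permutation diagram cross exactly when the orders of their top and bottom endpoints are opposite. I would then chase the equivalences: $s,s'$ cross $\iff$ ($t(s)<t(s')$ and $b(s)>b(s')$) or the symmetric case $\iff$ $I_s \supset I_{s'}$ or $I_{s'}\supset I_s$. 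When the segments do not cross, the two orders agree, giving $t(s)<t(s')$ and $b(s)<b(s')$, hence $\ell_s < \ell_{s'} < r_s < r_{s'}$; the intervals overlap properly with neither containing the other, so such pairs are correctly non-adjacent in the containment graph.

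For the converse I would run the same construction backwards. After perturbing so that all $2n$ endpoints are distinct (general position), I place the left endpoints in increasing order on $L_1$ and the right endpoints in increasing order on $L_2$, joining the two copies of each interval by a segment. The same endpoint-order criterion shows that two segments cross iff the left-endpoint order and right-endpoint order of the corresponding intervals disagree, which is precisely the statement that one interval is nested inside the other. Thus crossing again matches containment, the two maps are manifestly inverse to one another, and the two graph classes coincide.

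The only genuinely delicate point, and the step I would treat most carefully, is keeping \emph{containment} separate from \emph{intersection}: I must confirm that properly overlapping intervals map to parallel (non-crossing) segments and therefore to non-edges, and that the normalization ``move the bottom line to the right'' (the shift by $n$) never manufactures a disjoint pair that could be confused with a partial overlap or disturb the order correspondence. Once the three mutual positions of a pair of intervals---nested, properly overlapping, disjoint---are matched against the two positions of a pair of segments---crossing, parallel---the equivalence is immediate, and the degenerate ties are disposed of by the general-position assumption.
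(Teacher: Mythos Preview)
Your proof is correct and follows essentially the same approach as the paper: the paper's ``move the bottom line to the right of the topline'' is exactly your shift by $n$ producing $I_s=[t(s),n+b(s)]$, and for the converse both you and the paper place left endpoints in order on the topline and right endpoints in order on the bottom line. Your treatment is more explicit---you spell out the endpoint-rank dictionary, separate the nested/overlapping/disjoint cases, and note the general-position perturbation---but the underlying construction is identical.
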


Permutation graphs can be recognized in linear
time~\cite{kn:mcconnell,kn:tedder}.
Notice that permutation graphs are perfect since they have no
induced cycles of length more than four~\cite{kn:chudnovsky2}.
A graph is perfect if for every induced subgraph the clique number
is the same as the chromatic number. If the clique -- and chromatic
number of a graph are the same then these numbers can be computed in
polynomial time~\cite{kn:grotschel}.
However, in a permutation graph computing
a largest clique corresponds to finding a
longest increasing subsequence and this can be computed
very efficiently~\cite{kn:fredman}. (Notice that a permutation graph has
a clique or an independent set with at least $\sqrt{n}$
vertices~\cite{kn:erdos3}.)

\end{document}